\newtheorem{definition}{Definition}
\newtheorem{theorem}{Theorem}
\newtheorem{lemma}[theorem]{Lemma}
\newtheorem{corollary}[theorem]{Corollary}
\newcommand\p{\circle*{0.3}}
\title{Representing split graphs by words}
\author{Herman~Z.~Q.~Chen\footnote{School of Statistics and Data Science, Nankai University, P.R.\ China. {\bf Email:} zqchern@163.com}, Sergey Kitaev\footnote{Department of Computer and Information Sciences, University of Strathclyde, 26 Richmond Street, Glasgow, G1 1XH, United Kingdom. 
{\bf Email:} sergey.kitaev@cis.strath.ac.uk.}, \  and Akira Saito\footnote{Department of Information Science, Nihon University, Sakurajosui 3-25-40
Setagaya-Ku Tokyo 156--8550, Japan. {\bf Email:} asaito@chs.nihon-u.ac.jp.}}
\begin{document}  

\maketitle

\abstract{There is a long line of research in the literature dedicated to word-representable graphs, which generalize several important classes of graphs. However, not much is known about word-representability of split graphs, another important class of graphs. 

In this paper, we show that threshold graphs, a subclass of split graphs, are word-representable. Further, we prove a number of general theorems on word-representable split graphs, and use them to characterize computationally such graphs with cliques of size 5 in terms of 9 forbidden subgraphs, thus extending the known characterization for word-representable split graphs with cliques of size 4. Moreover, we use split graphs, and also provide an alternative solution, to show that gluing two word-representable graphs in any clique of size at least 2 may, or may not, result in a word-representable graph. The two surprisingly simple solutions provided by us answer a question that was open for about ten years. 
}  

\section{Introduction}\label{sec1}

A graph $G=(V,E)$ is {\em word-representable} iff there exists a word $w$
over the alphabet $V$ such that letters $x$ and $y$, $x\neq y$, alternate in $w$ iff $xy\in E$. Here, by {\em alternation}  of $x$ and $y$ in $w$ we mean that after removing {\em all} letters {\em but} the copies of $x$ and $y$ we either obtain a word $xyxy\cdots$, or a word $yxyx\cdots$. For example, the cycle graph $C_5$ labeled by 1--5 in clock-wise direction can be represented by the word 1521324354. It is easy to see that the class of word-representable graphs is {\em hereditary}. That is, removing a vertex in a word-representable graph results in a word-representable graph.  

Up to date, many papers have been written on the subject~\cite{K17}, and the core of the book \cite{KL15} is devoted to the theory of word-representable graphs. It should also be mentioned that the software produced by Marc Glen \cite{G} is often of great help in dealing with such graphs. Word-representable graphs are important as they generalize several fundamental classes of graphs such as {\em circle graphs}, {\em $3$-colorable graphs} and {\em comparability graphs} \cite{KL15}.

An orientation of a graph is {\em semi-transitive} if it is acyclic, and for any directed path $u_1\rightarrow u_2\rightarrow\cdots\rightarrow u_k$ either there is no edge between $u_1$ and $u_k$, or there is an edge $u_i\rightarrow u_j$ for all $1\leq i<j\leq k$. A key result in the area is the following theorem.  

\begin{theorem}[\cite{HKP16}]\label{key-thm} A graph is word-representable iff it admits a semi-transitive orientation. \end{theorem}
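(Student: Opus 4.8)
The plan is to prove the two implications separately; the forward one (word\nobreakdash-representable $\Rightarrow$ semi-transitive orientation) is short, whereas the converse is where essentially all the work lies.

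For the forward direction, suppose $G$ is represented by a word $w$, and orient each edge $xy\in E$ by first occurrence: put $x\to y$ precisely when the first copy of $x$ in $w$ precedes the first copy of $y$. The map sending a vertex to the position of its first copy is order\nobreakdash-preserving along every directed edge, so the orientation is acyclic. To check semi-transitivity, take any directed path $u_1\to u_2\to\cdots\to u_k$ together with an edge between $u_1$ and $u_k$, which by acyclicity must be $u_1\to u_k$. Since consecutive vertices $u_i,u_{i+1}$ alternate in $w$ with the first copy of $u_i$ coming first, $u_i$ occurs at least as many times as $u_{i+1}$; combined with the edge $u_1u_k$, whose endpoints then have multiplicities differing by at most one, this forces all the $u_i$ to have multiplicities in $\{m,m+1\}$ for a single $m$, non-increasing along the path. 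Writing $u_i^{(j)}$ for the position of the $j$-th copy of $u_i$, the consecutive alternations give $u_i^{(j)}<u_{i+1}^{(j)}<u_i^{(j+1)}$ while the long edge gives $u_k^{(j)}<u_1^{(j+1)}$; chaining these shows that inside the set $\{u_1,\dots,u_k\}$ every $j$-th copy precedes every $(j+1)$-st copy. Consequently any two of these letters alternate in $w$, so every $u_iu_j$ with $i<j$ is an edge, oriented $u_i\to u_j$, and the orientation is semi-transitive.

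For the converse I would argue by induction on $|V(G)|$. If the given semi-transitive orientation is transitive, then $G$ is a comparability graph, and the concatenation of all linear extensions of the associated partial order represents $G$, since two vertices alternate in this word exactly when they are comparable, i.e.\ adjacent. In general, pick a source $v$ of the orientation (one exists by acyclicity); the graph $G-v$ inherits a semi-transitive orientation, so by induction it has a representing word $w$. It then remains to insert copies of $v$ into a suitably chosen such $w$ so that $v$ alternates with exactly its neighbourhood $N(v)$, which equals its out-neighbourhood because $v$ is a source. Here semi-transitivity does the work: shortcut-freeness implies, for instance, that any vertex lying on a directed path from $v$ to a neighbour of $v$ is itself a neighbour of $v$, which severely restricts how $N(v)$ can be interleaved with the rest of $G-v$. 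The idea is to carry a strengthened inductive hypothesis — that a semi-transitively oriented graph admits a representing word of a prescribed form, e.g.\ one that can be cut into blocks with each eventual source's neighbour occurring once per block — so that a fresh copy of $v$ can be dropped between consecutive blocks and the result still represents $G$.

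The main obstacle is precisely this insertion step. The forward direction and the transitive sub-case of the converse are routine, but to make the induction close one must (i) find the right strengthened statement about the representing word, strong enough to permit the insertion yet still provable by the same induction, and (ii) verify that this prescribed form, together with the correctness of the representation, is preserved when the chosen source is added back, using only acyclicity and shortcut-freeness. Isolating that invariant and checking it survives the insertion is the technical heart of the proof.
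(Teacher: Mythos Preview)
The paper does not prove this theorem at all: it is quoted from \cite{HKP16} and used as a black box, so there is no ``paper's own proof'' to compare against. What you have written must therefore be judged on its own.

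Your forward direction is correct and is essentially the standard argument: the first-occurrence orientation is acyclic, and the counting of multiplicities along a directed path with a chord at the ends forces the $j$-th copies of all $u_1,\dots,u_k$ to sit entirely before the $(j{+}1)$-st copies, which gives pairwise alternation and hence the missing edges.

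The converse, however, is not a proof but a proof \emph{plan}, and you say so yourself. Two concrete issues. First, the transitive base case is fine (concatenating all linear extensions does represent a comparability graph), but it is not actually used as a base case in your inductive scheme: your induction removes one source and appeals to the hypothesis on $G-v$ regardless of whether the orientation on $G-v$ is transitive. Second, and more importantly, the insertion step is exactly the crux of the theorem, and you have only described what an adequate strengthened invariant would have to accomplish, not produced one. The observation that every vertex on a directed path from $v$ to a neighbour of $v$ is itself a neighbour (a consequence of shortcut-freeness) is true and relevant, but it is far from sufficient to guarantee that \emph{some} representing word of $G-v$ can absorb the new letter $v$ correctly; an arbitrary representing word of $G-v$ certainly need not. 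The published proof in \cite{HKP16} does not proceed by this kind of one-vertex induction at all: it constructs a uniform word directly from a topological order of the semi-transitive orientation, producing in fact a $k$-uniform representation with an explicit bound on $k$. If you want to rescue the inductive route, you must state precisely the block structure your representing words are required to have, prove that every semi-transitively oriented graph admits such a word, and then verify both that inserting $v$ preserves the block structure and that it creates alternation with exactly $N(v)$ --- none of which is done here.
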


In this paper, we will need the following simple lemma. 

\begin{lemma}[\cite{KLMW17}]\label{lem-tran-orie} Let $K_m$ be a clique in a graph $G$. Then any acyclic orientation of $G$ induces a transitive orientation on $K_m$ with a single source (a vertex with no in-coming edges) and a single sink (a vertex with no out-going edged). \end{lemma}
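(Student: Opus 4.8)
The plan is to argue in two stages: first that the induced orientation on $K_m$ is transitive, and then that a transitive tournament (which is what a transitively oriented complete graph is) has a unique source and a unique sink. Both stages are short, so the ``obstacle'' here is really just organizing the argument cleanly rather than overcoming a genuine difficulty.

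For the first stage, suppose we have an acyclic orientation of $G$; restrict it to the vertex set of $K_m$, obtaining a tournament $T$ (every pair is adjacent since $K_m$ is a clique). I would show $T$ is transitive directly: take vertices $x,y,z$ with $x\rightarrow y$ and $y\rightarrow z$ in $T$. The edge between $x$ and $z$ exists in $K_m$ and is oriented one way or the other. If it were oriented $z\rightarrow x$, then $x\rightarrow y\rightarrow z\rightarrow x$ would be a directed cycle in $G$, contradicting acyclicity. Hence $x\rightarrow z$, which is exactly transitivity. (Alternatively, one may invoke the standard fact that an acyclic tournament is transitive, but the three-line argument above is self-contained.)

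For the second stage, a transitive tournament on $m$ vertices linearly orders its vertices: define $x \prec y$ iff $x \rightarrow y$; transitivity and totality of the tournament make $\prec$ a strict total order on a finite set, which therefore has a unique minimum and a unique maximum. The minimum is a vertex with no in-coming edge within $K_m$ (a source of $K_m$), and the maximum is a vertex with no out-going edge within $K_m$ (a sink of $K_m$); uniqueness of min and max gives uniqueness of the source and sink. I would also note explicitly, since it is used later, that this source and sink are taken \emph{within} $K_m$ — they need not be global sources or sinks of $G$.

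The step I expect to require the most care is simply making the quantifiers precise: ``source'' and ``sink'' must be understood relative to the subgraph $K_m$, and one should be careful that the directed cycle obtained in the transitivity argument genuinely lies in $G$ (it does, since all three edges $xy$, $yz$, $zx$ are edges of the clique, hence of $G$, with the orientations inherited from $G$). Beyond that, the lemma is immediate from the definitions of acyclicity and transitivity together with the existence of extrema in a finite linear order.
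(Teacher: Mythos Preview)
Your argument is correct and is the standard one: acyclicity forces transitivity on the tournament induced on $K_m$ (via the $3$-cycle contradiction), and a finite transitive tournament is a linear order with a unique minimum (source) and maximum (sink). Note, however, that the paper does not supply its own proof of this lemma; it is merely quoted from \cite{KLMW17}, so there is nothing in the present paper to compare your approach against.
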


Even though much is known about word-representable graphs, there is only one paper, namely \cite{KLMW17}, dedicated to the study of the word-representability of {\em split graphs} (considered, e.g.\ in \cite{CH77,FH77,Gol,MP95}), that is, graphs in which the vertices can be partitioned into a clique and an independent set. Section~\ref{split-gr-summary} overviews the most relevant to this paper results in \cite{KLMW17}, that can be summarised as follows: 
\begin{itemize}
\item Spit graphs with cliques of size at most 3 are word-representable.
\item Split graphs in which the clique is of size 4 are characterized by avoiding the four graphs in Figure~\ref{nonRepTri} as induced subgraphs. 
\item  Necessary and sufficient conditions for an orientation of a split graph to be semi-transitive are given. 
\end{itemize}

\noindent
The major results in this paper can be summarized as follows:

\begin{itemize}
\item The subclass of split graphs known as {\em threshold graphs} is shown to be word-representable in Theorem~\ref{thm-threshold}. Threshold graphs were first introduced by Chv\'{a}tal and Hammer in \cite{CH77}. A chapter on these graphs appears in \cite{Gol}, and the book \cite{MP95} is devoted to them.
\item Split graphs in which the clique is of size $m$ and clique's vertices are of degree at most $m$ are word-representable (see Theorem~\ref{thm-m+1}).
\item An upper bound  on the number of vertices in the independent set of any given degree in a word-representable split graph is given (see Theorems~\ref{degree-thm1} and~\ref{degree-thm2}).
\item The upper bound is used to characterize computationally split graphs having the clique of size 5 in terms of 9 forbidden subgraphs --- those in Figures~\ref{nonRepTri} and~\ref{nonRepT5-T9} (see Section~\ref{sec-cliques-5-6}).
\item Word-representability of split-graphs is used in Section~\ref{glueing-sec} to show that gluing  two word-representable graphs in a clique of size at least 2 may result in a non-word-representable graph, which answers a long standing, though unpublished until \cite{K17}, open question. We also give an alternative solution to the problem, which is based on a generalization of a known result (see Section~\ref{constr-2}). 
\end{itemize}

\section{Split graphs and word-representation}\label{split-gr-summary}

Let $S_n=(E_{n-m},K_m)$ be a split graph on $n$ vertices, where the vertices of $S_n$ are 
partitioned into a maximal clique $K_m$ and an independent set $E_{n-m}$  
(the vertices in $E_{n-m}$ are of degree at most $m-1$). 

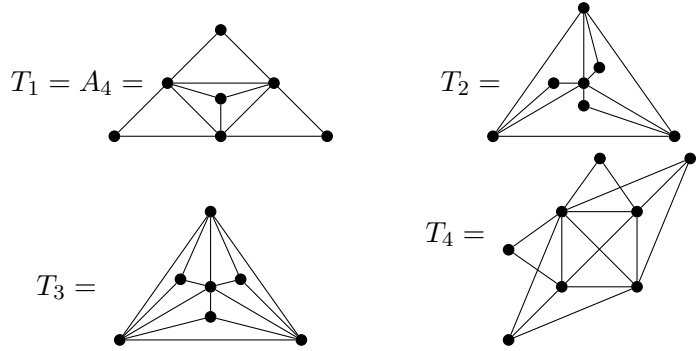
\begin{figure}
\begin{center}

\begin{tabular}{ccccc}
\begin{tikzpicture}[node distance=1cm,auto,main node/.style={fill,circle,draw,inner sep=0pt,minimum size=4pt}]

\node[main node] (1) {};
\node[main node] (2) [below left of=1] {};
\node[main node] (3) [below right of=1] {};
\node[main node] (4) [below left of=2] {};
\node[main node] (5) [below right of=2] {};
\node[main node] (6) [below right of=3] {};
\node[main node] (7) [above of=5, yshift=-0.5cm] {};

\node (8) [left of=2,xshift=-2mm] {$T_1=A_4=$};

\path
(1) edge (4)
(1) edge (6);

\path
(5) edge (2)
(5) edge (3)
(5) edge (4)
(5) edge (6);

\path
(7) edge (2)
(7) edge (3)
(7) edge (5);

\path
(2) edge (3);
\end{tikzpicture}

& 

\ \ \ 

&

\begin{tikzpicture}[node distance=1cm,auto,main node/.style={fill,circle,draw,inner sep=0pt,minimum size=4pt}]

\node[main node] (1) {};
\node[main node] (2) [below of=1] {};
\node[main node] (3) [below left of=2, xshift=-0.5cm] {};
\node[main node] (4) [below right of=2, xshift=0.5cm] {};
\node[main node] (5) [left of=2, xshift=0.6cm] {};
\node[main node] (6) [below of=2, yshift=0.7cm] {};
\node[main node] (7) [above right of=2, xshift=-0.5cm, yshift=-0.5cm] {};
\node (8) [left of=2,  xshift=-0.5cm] {$T_2=$};

\path
(1) edge (2)
(1) edge (4)
(1) edge (7)
(1) edge (3);

\path
(2) edge (3)
(2) edge (4)
(2) edge (5)
(2) edge (6)
(2) edge (7);

\path
(3) edge (4)
(3) edge (5);

\path
(6) edge (4);

\end{tikzpicture}

\\

\begin{tikzpicture}[node distance=1cm,auto,main node/.style={fill,circle,draw,inner sep=0pt,minimum size=4pt}]

\node[main node] (1) {};
\node[main node] (2) [below of=1] {};
\node[main node] (3) [below left of=2,xshift=-0.5cm] {};
\node[main node] (4) [below right of=2,xshift=0.5cm] {};
\node[main node] (5) [below of=2,yshift=0.6cm] {};
\node[main node] (6) [left of=2,yshift=0.1cm,xshift=0.6cm] {};
\node[main node] (7) [right of=2,yshift=0.1cm,xshift=-0.6cm] {};

\node [above left of=3] {$T_3=$};

\path
(2) edge (6)
(2) edge (7)
(5) edge (3)
(5) edge (4)
(5) edge (2)
(1) edge (3)
(1) edge (4)
(6) edge (3)
(7) edge (4)
(1) edge (6)
(1) edge (7)
(2) edge (3)
(2) edge (4)
(4) edge (3)
(1) edge (2);
\end{tikzpicture}

& 

\ \ \ 

& 

\begin{tikzpicture}[node distance=1cm,auto,main node/.style={fill,circle,draw,inner sep=0pt,minimum size=4pt}]

\node[main node] (1) {};
\node[main node] (2) [right of=1] {};
\node[main node] (3) [below of=2] {};
\node[main node] (4) [left of=3] {};
\node[main node] (5) [above right of=1,xshift=-0.2cm] {};
\node[main node] (6) [above right of=2] {};
\node[main node] (7) [below left of=1,yshift=0.2cm] {};
\node[main node] (8) [below left of=4] {};

\node [above left of=7,yshift=-0.5cm] {$T_4=$};

\path
(1) edge (2)
(1) edge (3)
(1) edge (4)
(1) edge (5)
(1) edge (6)
(1) edge (7)
(1) edge (8);

\path
(2) edge (3)
(2) edge (4)
(2) edge (5)
(2) edge (6);

\path
(3) edge (4)
(3) edge (6)
(3) edge (8);

\path
(4) edge (8)
(4) edge (7);

\end{tikzpicture}
\end{tabular}

\caption{\label{nonRepTri} The minimal non-word-representable split graphs $T_1$, $T_2$, $T_3$, $T_4$}
\end{center}
\end{figure}

In this section, we overview most relevant to us results in \cite{KLMW17}.

\begin{lemma}[\cite{KLMW17}]\label{lemma-assumptions} Let $S_n=(E_{n-m},K_m)$ be a split graph, and a spit graph $S_{n+1}$ is obtained from $S_n$ by either adding a vertex of degree $0$ or  $1$, or by ``copying'' a vertex, that is, by adding a vertex whose neighbourhood is identical to the neighbourhood of a vertex in $S_n$ (if copying a vertex in $K_m$, then the copy is connected to the original vertex). Then $S_n$ is word-representable iff $S_{n+1}$ is word-representable. \end{lemma}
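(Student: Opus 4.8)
The plan is to prove both directions of the equivalence by manipulating a word-representant, using the semi-transitivity characterization (Theorem~\ref{key-thm}) where convenient but probably arguing directly with words. The forward direction (if $S_n$ is word-representable then so is $S_{n+1}$) is the substantive one; the reverse direction is immediate because $S_n$ is an induced subgraph of $S_{n+1}$ and the class of word-representable graphs is hereditary. So I would state the reverse direction in one sentence and spend the proof on the forward direction, splitting into the three construction types.

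First, suppose $S_{n+1}$ is obtained by adding a vertex $v$ of degree $0$. Given a word $w$ representing $S_n$, I claim $wv$ (append a single $v$ at the end) represents $S_{n+1}$: a letter occurring once alternates with no other letter, so $v$ is isolated, and the alternation pattern among the letters of $w$ is unchanged. Next, suppose $v$ has degree $1$, with unique neighbour $u$. Here I would use the standard trick: take a word $w$ for $S_n$, and prepend one copy of $v$, i.e. consider $vw$ or, more carefully, insert the block structure so that $v$ alternates with $u$ only. Actually the clean statement is: if $w$ represents $G$ and we want to add $v$ adjacent exactly to $u$, then since a single occurrence of $v$ alternates with $u$ iff $u$ occurs exactly once — which need not hold — the right move is to write $w = w_1 u w_2$ where $w_1$ contains no $u$ (take the prefix up to the first $u$), and form $w' = v w_1 u v w_2$; then $v$ alternates with $u$ (pattern $v u v$ restricted to $\{u,v\}$ after the first $u$… ) — I would verify that $v$ alternates with $u$ and with no other letter. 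The cleanest route, and the one I would actually write, is to invoke semi-transitive orientations: orient $S_n$ semi-transitively, then attach $v$ as a source (or sink) adjacent only to $u$; any directed path through $v$ has length $1$ on the $v$-side, so no semi-transitivity obstruction is created, and acyclicity is preserved.

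For the copying case, let $v'$ be the new vertex copying $v\in V(S_n)$. If $v \in E_{n-m}$ (the independent set), then $v$ and $v'$ are non-adjacent and have the same neighbourhood. The word trick here: if $w$ represents $S_n$, replace each occurrence of $v$ in $w$ by the factor $v v'$ — then for any letter $x$, the restriction of the new word to $\{x, v'\}$ has the same alternation status as the restriction of $w$ to $\{x,v\}$ (one $v'$ sits immediately after each $v$, so it mirrors $v$'s pattern), giving $v' \sim x$ iff $v \sim x$ iff $v' \sim x$ in the target; and $v, v'$ restricted to $\{v,v'\}$ gives $vv'vv'\cdots$, i.e. they alternate, so they would be adjacent — which is wrong for the independent-set case. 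So instead replace each $v$ by $v v'$ except leave one of them, or more simply: this is exactly the known "duplication" operation and the correct substitution is to replace $v$ by $v'v$ at one chosen occurrence and by $v$ elsewhere won't work either. The standard fact (e.g.\ in \cite{KL15}) is that one can always add a true twin or a false twin to a word-representable graph; I would cite/adapt that. For the sub-case $v\in K_m$, the copy $v'$ is joined to all of $N(v)$ and also to $v$ itself, so $\{v,v'\} \cup (N(v)\cap K_m)$ is still a clique — here replacing each $v$ by $vv'$ does make $v,v'$ alternate (good, they must be adjacent) and makes $v'$ alternate with exactly the neighbours of $v$ (good). I would check that replacement works cleanly in this sub-case and handle the independent-set sub-case by the mirror substitution (replace $v$ by $v'$ in its first occurrence block appropriately, or use the reverse word) so that $v,v'$ end up \emph{non}-alternating.

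The main obstacle I anticipate is the false-twin (copying a vertex of the independent set) sub-case: naive factor substitution $v \mapsto vv'$ creates the unwanted edge $vv'$, so I need the right asymmetric insertion — the usual solution is to note that a word-representable graph $G$ has a representant in which a chosen vertex can be made to occur a prescribed number of times, or to use the observation that if $w$ represents $G$ then inserting $v'$ directly before the \emph{first} $v$ and directly after the \emph{last} $v$ and nowhere else yields $v' \sim x \iff v \sim x$ while $v'$ and $v$ do \emph{not} alternate (pattern $v' v \cdots v v'$ on two letters is $v'vv'$-ish, non-alternating as soon as $v$ occurs $\ge 2$ times — and the degenerate case where $v$ occurs once is handled separately, or circumvented by first doubling $v$ using a $k$-uniform representant). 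Getting these boundary occurrences exactly right is the only delicate bookkeeping; everything else is routine.
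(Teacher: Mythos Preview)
The paper does not actually prove this lemma: it is quoted from \cite{KLMW17} and used as a black box, so there is no ``paper's own proof'' to compare against. Your overall plan is the right one --- the backward direction is immediate by heredity, and the forward direction splits into the three cases --- and your treatments of the degree-$0$ vertex, the degree-$1$ vertex (via a semi-transitive orientation), and the true-twin case ($v\in K_m$, substitute $v\mapsto vv'$) are all fine.

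There is, however, a genuine gap in your false-twin construction. Inserting $v'$ immediately before the first $v$ and immediately after the last $v$ does make $v$ and $v'$ non-alternating once $v$ occurs at least twice, but it does \emph{not} in general give $v'$ the same neighbourhood as $v$. If $v$ occurs $k\ge 3$ times, then a letter $z$ alternating with $v$ must occur $k-1\ge 2$ times strictly between the first and last $v$, whereas your two copies of $v'$ bracket that whole block, so the $\{v',z\}$-subword looks like $v' z^{\,k-1} v'$ and $v'$ fails to alternate with $z$. Concretely, take $w=vzvzv$: here $v\sim z$, but your word $v'vzvzvv'$ restricted to $\{v',z\}$ is $v'zzv'$, so $v'\not\sim z$. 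Your construction is only correct when $v$ occurs at most twice, which you cannot assume.

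The cleanest repair is the one you already used for the degree-$1$ case: argue via Theorem~\ref{key-thm}. Given a semi-transitive orientation of $S_n$, orient every edge at the new vertex $v'$ exactly as the corresponding edge at $v$ is oriented. Since $v$ and $v'$ are non-adjacent with identical in- and out-neighbourhoods, no directed path can contain both (an out-neighbour of $v$ is an out-neighbour of $v'$, so there is no arc into $v'$ from any successor of $v$, and symmetrically); hence any directed path through $v'$ corresponds, by swapping $v'$ for $v$, to a directed path in $S_n$, and semi-transitivity transfers. Alternatively you can simply cite the twin/module closure results from \cite{KL15}, as you half-suggest.
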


\begin{definition}\label{def-K-triang} For $\ell\geq 3$, the graph $K^{\triangle}_\ell$ is obtained from the complete graph $K_\ell$ labeled by $1,\ldots, \ell$, by adding a vertex $i'$ of degree $2$ connected to vertices $i$ and $i+1$ for each $i\in \{1,\ldots,\ell-1\}$. Also, a vertex $\ell'$ connected to the vertices $1$ and $\ell$ is added.\end{definition}

\begin{theorem}[\cite{KLMW17}]\label{thm-K-Tri-m-w-r} $K^{\triangle}_\ell$ is word-representable. \end{theorem}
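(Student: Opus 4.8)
The plan is to exhibit an explicit semi-transitive orientation of $K^{\triangle}_\ell$ and invoke Theorem~\ref{key-thm}. First I would orient the clique $K_\ell$ transitively according to the natural labelling, i.e. direct $i \rightarrow j$ whenever $i < j$; by Lemma~\ref{lem-tran-orie} this is forced up to relabelling anyway, and it has the single source $1$ and single sink $\ell$. It remains to orient the edges incident to the degree-$2$ vertices $i'$. For the vertex $i'$ adjacent to $i$ and $i+1$ (for $1 \le i \le \ell-1$), I would orient $i \rightarrow i' \rightarrow i+1$, mimicking the direction of the clique edge $i \rightarrow i+1$ that $i'$ ``shadows''. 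For the remaining vertex $\ell'$ adjacent to $1$ and $\ell$, I would orient $1 \rightarrow \ell' \rightarrow \ell$. Call this orientation $D$.

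Next I would check that $D$ is acyclic. Each $i'$ has in-degree and out-degree $1$, so it cannot lie on a directed cycle unless the clique portion already contains one; but the clique portion is the transitive tournament on $1,\dots,\ell$, which is acyclic, so $D$ is acyclic. The main work is verifying the semi-transitivity condition: for every directed path $u_1 \rightarrow \cdots \rightarrow u_k$ with an edge between $u_1$ and $u_k$, all ``shortcut'' edges $u_a \rightarrow u_b$ must be present. I would argue by cases on how many of the $u_t$ are satellite vertices $i'$. Since each satellite vertex has a unique in-neighbour and unique out-neighbour (both in the clique) and no two satellite vertices are adjacent, any directed path through $D$ alternates in a very restricted way: between consecutive clique vertices it can make at most one ``detour'' through a satellite, and a satellite, if it occurs, must be an internal vertex $u_t$ with $1 < t < k$ flanked by two clique vertices that are themselves adjacent in $K_\ell$. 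Because there is an edge between $u_1$ and $u_k$, and satellites have degree $2$, both $u_1$ and $u_k$ must be clique vertices (a satellite could only be an endpoint of a path of length $\le 2$, and such short paths are trivially fine). Restricting to the clique vertices on the path, they form an increasing sequence $a_1 < a_2 < \cdots < a_r$ with $u_1 = a_1$, $u_k = a_r$, and consecutive $a_s, a_{s+1}$ are either equal-to-adjacent in the path or adjacent in the clique; in either case all edges $a_s \rightarrow a_t$, $s < t$, are present in $K_\ell$. The only shortcut edges one still has to account for are those with a satellite endpoint, but a satellite $i'$ is adjacent only to $i$ and $i+1$, which are consecutive on the path, so no shortcut edge with a satellite endpoint is ever required. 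Hence the condition holds.

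The step I expect to be the main obstacle is the bookkeeping in this last case analysis: one must be careful that a directed path is not allowed to pass through a satellite $i'$ and then ``come back'' to a clique vertex $\le i$, which would break monotonicity — but this is exactly ruled out because $i' \rightarrow i+1$ is the only edge leaving $i'$ and the clique orientation is monotone, so after visiting $i'$ the path is at vertex $i+1$ and can only continue to strictly larger labels. I would also double-check the corner cases $\ell' $ (which behaves like the ``edge'' $1 \rightarrow \ell$, already the longest clique edge, so it never creates a new obstruction) and short paths of length $1$ or $2$, which are vacuously or immediately semi-transitive. Once these are settled, Theorem~\ref{key-thm} gives word-representability of $K^{\triangle}_\ell$.
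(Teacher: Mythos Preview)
Your orientation is not semi-transitive, and the mistake is in the sentence ``a satellite $i'$ is adjacent only to $i$ and $i+1$, which are consecutive on the path, so no shortcut edge with a satellite endpoint is ever required.'' Take $\ell\ge 3$ and the directed path $1 \to 1' \to 2 \to 3$ in your orientation $D$. There \emph{is} an edge between the endpoints (namely $1\to 3$ in the clique), so semi-transitivity demands the shortcut $1'\to 3$; but $1'$ is adjacent only to $1$ and $2$, so this edge does not exist. More generally, once the path passes through $i'$ and reaches $i+1$, it can continue inside the clique to any $j>i+1$, and then the required shortcut $i'\to j$ is always missing. In the language of Theorem~\ref{semi-tran-groups}, your vertex $i'$ with $i\to i'\to i+1$ is neither a source (type~A), nor a sink (type~B), nor of type~C (which would force its in-neighbourhood to start at $p_1$ and its out-neighbourhood to end at $p_m$), so by Theorem~\ref{main-orientation} the orientation cannot be semi-transitive.

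The fix is exactly what Theorems~\ref{semi-tran-groups}--\ref{main-orientation} suggest: orient the clique as you did, make each $i'$ with $1\le i\le \ell-1$ a \emph{sink} (or a source), i.e.\ $i\to i'\leftarrow i{+}1$, and make $\ell'$ of type~C via $1\to \ell'\to \ell$. Then any directed path that reaches a satellite $i'$ with $i<\ell$ terminates there, and the only path through $\ell'$ is $1\to\ell'\to\ell$, so no forbidden shortcuts arise. This is precisely the mechanism behind Corollary~\ref{K-ell-k} (with $k=2$), and it is how the cited source \cite{KLMW17} handles it; the present paper does not give its own proof of Theorem~\ref{thm-K-Tri-m-w-r} but quotes it from there.
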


\begin{definition}\label{def-Aell} For $\ell\geq 4$, let $A_\ell$ be the graph obtained from $K^{\triangle}_{\ell-1}$ by adding a vertex $\ell$ connected to the vertices $1,\ldots,\ell-1$ and no other vertices.  Note that $A_4=T_1$ in Figure~\ref{nonRepTri}.  \end{definition}

\begin{theorem}[\cite{KLMW17}]\label{Aell-min-non-repres} $A_\ell$ is a minimal non-word-representable graph. \end{theorem}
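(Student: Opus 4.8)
The plan is to show two things: first, that $A_\ell$ itself is not word-representable, and second, that every proper induced subgraph of $A_\ell$ is word-representable. For the non-representability part, I would use Theorem~\ref{key-thm} and argue that $A_\ell$ admits no semi-transitive orientation. Suppose for contradiction that $O$ is a semi-transitive (hence acyclic) orientation of $A_\ell$. Restricted to the clique $K_{\ell-1}$ on vertices $1,\dots,\ell-1$, Lemma~\ref{lem-tran-orie} forces a transitive tournament with a unique source and unique sink; relabel so that the induced linear order is $1 \to 2 \to \cdots \to (\ell-1)$. Now consider the apex vertex $\ell$, adjacent to all of $1,\dots,\ell-1$: together with the clique it forms a copy of $K_\ell$, so again by Lemma~\ref{lem-tran-orie} vertex $\ell$ must be either the global source or the global sink of this $K_\ell$, i.e.\ either $\ell \to i$ for all $i$, or $i \to \ell$ for all $i$. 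The crux is then to examine the degree-2 vertices $i'$ (adjacent to $i$ and $i+1$, indices mod $\ell$ in the wrap-around case) and derive a contradiction with the shortcut condition of semi-transitivity.

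Concretely, I expect the obstruction to come from a directed path through some triangle vertex $i'$ that has no valid ``shortcut'' completion. For instance, if $\ell$ is the global sink, consider a directed path $1 \to \cdots \to i \to (\ell-1) \to \ell$ (using the clique edges and the edge into $\ell$); since $1$ and $\ell$ are adjacent, semi-transitivity demands $1 \to j$ for all intermediate $j$, which is consistent — so the real tension must be located at the small pendant triangles. The key point is that for each $i$, the vertices $i$, $i+1$, and $i'$ form a triangle, so $O$ orients it transitively; the orientation of the edge $i \to i+1$ (already fixed by the clique order) together with the position of $i'$ creates either the path $i \to i' \to i+1$ or forces $i'$ to be a source or sink of that triangle. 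One then chains such a triangle with the long path in $K_{\ell-1}$ and the apex $\ell$ to produce a directed path whose endpoints are adjacent but which lacks the required transitive shortcut edge; this is where I anticipate the main case analysis, and it is the heart of the argument. (This should parallel the known proof that $A_4 = T_1$ is not word-representable, extended uniformly in $\ell$.)

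For minimality, I would show that deleting any single vertex of $A_\ell$ yields a word-representable graph, and then invoke heredity (Section~\ref{sec1}) to cover all proper induced subgraphs. There are three types of vertices to delete. (i) Deleting the apex $\ell$ leaves exactly $K^{\triangle}_{\ell-1}$, which is word-representable by Theorem~\ref{thm-K-Tri-m-w-r}. (ii) Deleting a degree-2 triangle vertex $i'$: the resulting graph is $K^{\triangle}_{\ell-1}$ with one pendant triangle removed, plus the apex $\ell$; here I would exhibit a semi-transitive orientation directly — orient $K_{\ell-1}$ linearly as $1 \to \cdots \to (\ell-1)$, make $\ell$ the global source (so $\ell \to i$ for all $i$), and orient each surviving triangle vertex $j'$ as $j \to j' \to j{+}1$, checking that no forbidden ``non-shortcut'' path arises once the critical triangle is gone. (iii) Deleting a clique vertex $i$: this destroys the clique structure enough that a similar explicit semi-transitive orientation can be built, or alternatively one can reduce to a smaller word-representable configuration via Lemma~\ref{lemma-assumptions}. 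The main obstacle throughout is step (ii)–(iii): verifying the shortcut condition for all directed paths in the explicitly constructed orientations, which requires care but is a finite, structured check given the highly symmetric description of $A_\ell$ in Definition~\ref{def-Aell}.
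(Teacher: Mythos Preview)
The paper does not actually give a proof of this theorem; it is quoted from \cite{KLMW17} as background. So there is no in-paper argument to compare against. That said, your sketch has two concrete errors that would prevent it from going through.

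\textbf{Non-representability.} Your claim that ``vertex $\ell$ must be either the global source or the global sink of this $K_\ell$'' is false. Lemma~\ref{lem-tran-orie} only says that the transitive tournament on $K_\ell$ has \emph{some} unique source and sink; it gives no reason for $\ell$ to occupy either extreme position. In fact $\ell$ can sit at any spot in the linear order of $K_\ell$, so the case split you set up (``if $\ell$ is the global sink\ldots'') does not exhaust the possibilities. The cleaner route, and the one the surrounding machinery points to, is to invoke Theorem~\ref{main-orientation}: each degree-2 vertex $i'$ must have its neighbourhood $\{i,i+1\}$ form a consecutive (possibly cyclic) length-2 interval on $\vec{P}$, and there is no way to insert the extra vertex $\ell$ into the cyclic order $1,2,\ldots,\ell-1$ without breaking one of those adjacencies.

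\textbf{Minimality, case (ii).} The explicit orientation you propose after deleting a triangle vertex $i'$ is \emph{not} semi-transitive. With $\ell$ as global source and each surviving $j'$ oriented $j \to j' \to j{+}1$, the directed path
\[
\ell \ \to\ 1 \ \to\ 1' \ \to\ 2
\]
has adjacent endpoints (the edge $\ell\to 2$), yet $\ell$ and $1'$ are non-adjacent, so the required shortcut $\ell\to 1'$ is missing. In the language of Theorem~\ref{semi-tran-groups}, your $j'$ are neither of type A, nor B, nor C. The fix is to make each surviving $j'$ a source or a sink (type A or B) and to place $\ell$ on $\vec{P}$ at the position vacated by the deleted triangle, so that every remaining $\{j,j+1\}$ is a genuine consecutive interval; then Theorem~\ref{main-orientation} applies directly. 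Case (iii) as written is too vague to assess.
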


%
\begin{theorem}[\cite{KLMW17}]\label{main-2} Let $S_n=(E_{n-4},K_4)$ be a split graph. Then  $S_n$ is word-representable iff $S_n$ does not contain the graphs $T_1$, $T_2$, $T_3$ and $T_4$ in Figure~\ref{nonRepTri} as induced subgraphs.\end{theorem}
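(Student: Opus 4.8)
The plan is to prove both directions separately, using the semi-transitivity characterization (Theorem~\ref{key-thm}) throughout.

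For the "only if" direction, I would argue that each of $T_1,T_2,T_3,T_4$ is itself non-word-representable, and then invoke heredity: since word-representability is closed under taking induced subgraphs, if $S_n$ contains any $T_i$ as an induced subgraph then $S_n$ cannot be word-representable. For $T_1=A_4$ this is exactly Theorem~\ref{Aell-min-non-repres}. For $T_2,T_3,T_4$ one checks directly that no semi-transitive orientation exists: by Lemma~\ref{lem-tran-orie}, any acyclic orientation restricts to a transitive tournament on the clique $K_4$, fixing a linear order $v_1\to v_2\to v_3\to v_4$ on its vertices; one then examines the possible orientations of the edges joining each independent-set vertex to the clique and, in every case, exhibits either a directed cycle or a directed path (usually of length $2$ or $3$ through the clique together with one outside vertex) that has a "shortcut" edge present but oriented the wrong way, contradicting semi-transitivity. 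This is a finite case analysis; the book \cite{KL15} or \cite{KLMW17} presumably records it, so I would cite it or reproduce the short argument.

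For the "if" direction — the substantive one — I would proceed by a reduction/structural argument on split graphs $S_n=(E_{n-4},K_4)$ that avoid $T_1,\dots,T_4$ as induced subgraphs. First I would apply Lemma~\ref{lemma-assumptions} to reduce to a "normal form": delete any independent vertex of degree $0$ or $1$, and delete one of any pair of independent vertices with identical neighbourhoods, since none of these operations affects word-representability and none can create a forbidden induced subgraph (a forbidden subgraph in the smaller graph would already have been present, and none of $T_1,\dots,T_4$ contains twins or degree-$\le 1$ vertices of the right kind — this needs a quick check). So we may assume every vertex of $E_{n-4}$ has degree $2$ or $3$ and all these neighbourhoods are distinct; since $K_4$ has exactly $\binom{4}{2}=6$ pairs and $\binom{4}{3}=4$ triples, this already bounds $|E_{n-4}|\le 10$, making the class finite up to the reduction. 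Then I would classify which combinations of degree-$2$ and degree-$3$ attachments can coexist without forming $T_1,\dots,T_4$: the graph $A_4=T_1$ is precisely $K_4$ with all six degree-$2$ vertices absent but — wait, more carefully, $A_4$ is $K^\triangle_3$ plus an apex, i.e. a triangle with its three "ear" vertices plus a vertex adjacent to all three triangle vertices; translating, $T_1$ forbids having three degree-$2$ vertices covering the three edges of a triangle $T\subseteq K_4$ together with a degree-$3$ vertex attached to that same triangle $T$. Similar combinatorial descriptions of $T_2,T_3,T_4$ in this language give a short list of forbidden configurations of attachments.

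Having enumerated the finitely many maximal allowed configurations, I would for each one either exhibit an explicit semi-transitive orientation (equivalently, invoke Theorem~\ref{thm-K-Tri-m-w-r} that $K^\triangle_\ell$ is word-representable and Lemma~\ref{lemma-assumptions} to re-expand), or more cleanly give a uniform construction: orient $K_4$ as $v_1\to v_2\to v_3\to v_4$, and for each independent vertex $u$ with neighbourhood $N(u)=\{v_i:i\in I\}$, orient all edges at $u$ "inward from the smallest" or "according to position of $I$" in a way that one verifies is acyclic and has no bad shortcut; the forbidden-subgraph hypothesis is exactly what guarantees no obstruction arises. The main obstacle I expect is the bookkeeping in this last step: showing that the excluded configurations are \emph{exactly} the obstructions, i.e. that once $T_1,\dots,T_4$ are avoided a consistent global semi-transitive orientation always exists — this is where one must be careful that locally-good orientation choices at different independent vertices do not conflict, and it is likely handled by the "necessary and sufficient conditions for an orientation of a split graph to be semi-transitive" from \cite{KLMW17} mentioned in the overview, which I would quote and apply rather than re-derive.
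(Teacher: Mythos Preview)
The paper does not prove Theorem~\ref{main-2} at all: it is quoted verbatim from \cite{KLMW17} as part of the background summary in Section~\ref{split-gr-summary}, with no accompanying argument. So there is no ``paper's own proof'' to compare your proposal against.

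That said, your outline is a reasonable reconstruction of how the result is obtained in \cite{KLMW17}, and it correctly identifies the tools that paper develops for exactly this purpose --- namely the structural Theorems~\ref{semi-tran-groups}, \ref{relative-order}, and \ref{main-orientation}, which reduce word-representability of a split graph to a combinatorial condition on the cyclic-interval structure of the neighbourhoods of the independent vertices. Your reduction via Lemma~\ref{lemma-assumptions} to at most six degree-$2$ and four degree-$3$ attachments with distinct neighbourhoods is the right normalisation, and the finite case analysis you describe is indeed what remains. The one place your sketch is soft is the step you yourself flag: after excluding $T_1,\dots,T_4$, you must actually \emph{produce} a transitive ordering of $K_4$ (i.e.\ a choice of $\vec P$) under which every remaining attachment becomes a cyclic interval and the type-C conflicts of Theorem~\ref{relative-order} are avoided simultaneously. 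This is not automatic from local choices at each independent vertex; the content of the theorem is precisely that $T_1,\dots,T_4$ are the only obstructions to such a global ordering existing, and verifying that requires either an exhaustive check of the (finite) reduced configurations or a short structural argument. Your proposal defers this to a citation, which is fine given that the present paper does the same.
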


Let $S_n=(E_{n-m},K_m)$ be a word-representable split graph. Then, by Theorem~\ref{key-thm}, $S_n$ admits a semi-transitive orientation. Further, by Lemma~\ref{lem-tran-orie} we known that any such orientation induces a transitive orientation on $K_m$ with the longest directed path $\vec{P}$. Theorems~\ref{semi-tran-groups} and~\ref{relative-order} below describe the structure of semi-transitive orientations in an arbitrary word-representable split graph. 

\begin{theorem}[\cite{KLMW17}]\label{semi-tran-groups} Any semi-transitive orientation of $S_n=(E_{n-m},K_m)$ subdivides the set of all vertices in $E_{n-m}$ into three, possibly empty, groups corresponding to each of the following types, where $\vec{P}= p_1\rightarrow\cdots\rightarrow p_m$ is the longest directed path in $K_m$: 
\begin{itemize}
\item A vertex in $E_{n-m}$ is of {\em type A} if it is a source and is connected to all vertices in $\{p_i,p_{i+1},\ldots, p_j\}$ for some $1\leq i\leq j\leq m$;
\item A vertex in $E_{n-m}$ is of {\em type B} if it is a sink and is connected to all vertices in $\{p_i,p_{i+1},\ldots, p_j\}$ for some $1\leq i\leq j\leq m$; 
\item A vertex $v\in E_{n-m}$ is of {\em type C} if there is an edge $x\rightarrow v$ for each $x\in I_v=\{p_1,p_2,\ldots, p_i\}$ and there is an edge $v\rightarrow y$ for each $y\in O_v=\{p_j,p_{j+1},\ldots, p_m\}$ for some $1\leq i< j\leq m$.
\end{itemize} 
\end{theorem}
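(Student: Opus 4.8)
The plan is to analyze an arbitrary semi-transitive orientation of $S_n=(E_{n-m},K_m)$ and classify each vertex $v\in E_{n-m}$ according to whether it is a source, a sink, or neither, in the induced orientation. First I would invoke Lemma~\ref{lem-tran-orie}: since $K_m$ is a clique, any acyclic orientation — in particular the given semi-transitive one — restricts to a transitive tournament on $K_m$, so there is a unique Hamiltonian directed path $\vec P = p_1\to p_2\to\cdots\to p_m$, and $p_k\to p_\ell$ is an edge whenever $k<\ell$. Now fix $v\in E_{n-m}$ and let $I_v = \{p\in K_m : p\to v\}$ (in-neighbours of $v$ in the clique) and $O_v=\{p\in K_m : v\to p\}$ (out-neighbours). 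Since $v$ is only adjacent to clique vertices and $N(v)=I_v\sqcup O_v$, the whole argument reduces to showing that $I_v$ and $O_v$ are each a (possibly empty) contiguous block of the path $\vec P$, and that they sit in the right relative order, i.e.\ $I_v$ is an initial segment and $O_v$ a terminal segment when both are nonempty.

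The key step is the block/contiguity claim, and it follows from acyclicity together with the semi-transitivity condition applied to short directed paths through $v$. For contiguity of $I_v$: suppose $p_a\to v$ and $p_c\to v$ with $a<c$; I want $p_b\to v$ for every $a<b<c$. Since $\vec P$ gives $p_a\to p_b$ and $p_b\to p_c$, consider the edge between $p_b$ and $v$ (it exists iff $p_b\in N(v)$); if instead $v\to p_b$, then $p_a\to p_b$, $v\to p_b$ and $p_c\to v$ would combine with $p_a\to p_c$ in a way that produces either a directed cycle $v\to p_b\to\cdots$ — more carefully, one gets the path $p_a\to v\to p_b\to p_c$ but the edge $p_a\to p_c$ exists, forcing by semi-transitivity an edge $v\to p_c$, contradicting $p_c\to v$ (acyclicity). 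A symmetric argument handles contiguity of $O_v$, using the path structure on the other side. Finally, to place the blocks: if $p_i\in I_v$ and $p_j\in O_v$ then $i<j$, since $p_j\to p_i$ would be forbidden (it contradicts $\vec P$), giving $p_i\to p_j$, and then $p_i\to v\to p_j$ together with the edge $p_i\to p_j$ is consistent, but if $j\le i$ we get the cycle $v\to p_j\to\cdots\to p_i\to v$ (or directly $v\to p_i\to v$ when $i=j$), contradicting acyclicity. Hence $I_v=\{p_1,\ldots,p_i\}$ and $O_v=\{p_j,\ldots,p_m\}$ for appropriate indices.

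With contiguity and ordering in hand, the trichotomy is immediate: if $O_v=\emptyset$ then $v$ is a source adjacent to a contiguous block $\{p_i,\ldots,p_j\}$ — type A; if $I_v=\emptyset$ then $v$ is a sink adjacent to a contiguous block — type B; and if both are nonempty then $I_v=\{p_1,\ldots,p_i\}$ and $O_v=\{p_j,\ldots,p_m\}$ with $i<j$ — type C. (The case where a vertex has some neighbours it points to and some that point to it, but the two blocks are not anchored at the ends of $\vec P$, is exactly what the contiguity-plus-ordering argument rules out; I should double-check the boundary subcase $i=j-1$, i.e.\ $I_v$ and $O_v$ adjacent on the path, and the subcase where $v$ is isolated, which trivially falls under type A or B with an empty block.)

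The main obstacle I anticipate is organizing the case analysis for the block claims cleanly: there are several short paths through $v$ to consider and one must repeatedly play the semi-transitivity requirement (an edge $v\to p_\ell$ or $p_k\to v$ must exist) against acyclicity to derive each contradiction, being careful that the "shortcut" edge demanded by semi-transitivity is between a clique vertex and $v$ (both endpoints of the relevant subpath), not between two clique vertices where it is automatic. Getting the direction of every forced edge right, and making sure no case is silently assumed to have $I_v$ or $O_v$ nonempty, is where the proof can go wrong; the underlying ideas are all elementary consequences of Lemma~\ref{lem-tran-orie} and the definition of semi-transitive orientation.
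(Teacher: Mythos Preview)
Note first that the paper does not supply its own proof of this theorem: it is quoted from \cite{KLMW17} in the survey Section~\ref{split-gr-summary}. So there is no in-paper argument to compare against, and your proposal must stand on its own.

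Your overall strategy---fix the transitive order on $K_m$ via Lemma~\ref{lem-tran-orie}, split $N(v)$ into $I_v$ and $O_v$, and use short directed paths through $v$ together with semi-transitivity/acyclicity---is the right one. But there are two genuine gaps.

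First, a sign error in the final trichotomy: with your definitions $I_v=\{p:p\to v\}$ and $O_v=\{p:v\to p\}$, a \emph{source} has $I_v=\emptyset$ and a \emph{sink} has $O_v=\emptyset$. You have these reversed when you match the cases to types A and B.

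Second, and more substantively, the ``anchoring'' for type~C does not follow from what you prove. You establish (i) $I_v$ is an interval, (ii) $O_v$ is an interval, and (iii) every index in $I_v$ precedes every index in $O_v$. From these three facts you conclude $I_v=\{p_1,\ldots,p_i\}$ and $O_v=\{p_j,\ldots,p_m\}$, and you explicitly say the non-anchored case ``is exactly what the contiguity-plus-ordering argument rules out''. It is not: $I_v=\{p_2,p_3\}$, $O_v=\{p_5\}$ satisfies (i)--(iii) but is not of type~C. What actually rules it out is a further application of semi-transitivity using a vertex \emph{outside} $N(v)$: if $p_b\notin N(v)$ with $b<a$ and $p_a\in I_v$, pick any $p_c\in O_v$ and look at $p_b\to p_a\to v\to p_c$; the shortcut $p_b\to p_c$ forces $p_b\to v$, a contradiction. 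A symmetric path handles the right end. Your interval argument for $I_v$ has the same omission in miniature: you only treat the case where $p_b$ is adjacent to $v$ with the wrong orientation, not the case $p_b\notin N(v)$; the fix is the analogous path $p_a\to p_b\to p_c\to v$ (for type~B) or, in the type~C situation, the anchoring path above.
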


There are additional restrictions, given by the next theorem, on relative positions of the neighbours of vertices of types A, B and C. 

\begin{theorem}[\cite{KLMW17}]\label{relative-order} Let $S_n=(E_{n-m},K_m)$ be oriented semi-transitively with $\vec{P}= p_1\rightarrow\cdots\rightarrow p_m$. For a vertex $x\in E_{n-m}$ of type C,
there is no vertex $y\in E_{n-m}$ of type A or B, which is connected to both $p_{|I_x|}$ and $p_{m-|O_x|+1}$. Also, there is no vertex $y\in E_{n-m}$ of type C such that either $I_y$, or $O_y$ contains both $p_{|I_x|}$ and $p_{m-|O_x|+1}$.
\end{theorem}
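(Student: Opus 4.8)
The plan is to analyze the semi-transitive orientation locally around a type-C vertex $x$ and show that any of the forbidden configurations would create either a directed cycle or a violation of the semi-transitivity condition along a short path through $x$. Set $a = |I_x|$ and $b = m - |O_x| + 1$, so that $p_a$ is the last in-neighbour of $x$ along $\vec P$ and $p_b$ is the first out-neighbour; note $a < b$ by the definition of type C, and since $I_x, O_x$ partition nothing in particular we only know $p_a \to x \to p_b$ together with the clique edge $p_a \to p_b$ (if $a+1 < b$) or $p_a \to p_{a+1} = p_b$. The key observation is that $p_a \to x \to p_b$ is a directed path of length $2$, and there is an edge between $p_a$ and $p_b$ (they both lie on the clique $K_m$), so by semi-transitivity of the orientation there must be an edge $p_a \to p_b$, which there is — so this short path is fine, but any longer path through $x$ that ``reaches back'' to an in-neighbour or ``reaches forward'' past an out-neighbour must be checked.

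First I would handle the type-A/type-B case. Suppose $y \in E_{n-m}$ is of type A (the type-B case is symmetric by reversing all edges, which preserves semi-transitivity) and $y$ is adjacent to both $p_a$ and $p_b$. Since $y$ is a source, the edges are $y \to p_a$ and $y \to p_b$. Then consider the directed path $y \to p_a \to x \to p_b$: it has length $3$, its endpoints $y$ and $p_b$ are joined by the edge $y \to p_b$, so semi-transitivity forces the ``shortcut'' edge $y \to x$. But then $y$ and $x$ are adjacent, and I would look instead at the path $y \to p_a \to x$ (length $2$) together with the forced edge $y\to x$ — this is consistent — so the contradiction has to come from elsewhere. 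The right path to use is $p_a \to x \to p_b$ extended on the left: actually the cleanest contradiction uses the path through $x$ together with a clique vertex strictly between $p_a$ and $p_b$. Since $a < b$, if $a + 1 < b$ then $p_{a+1}$ exists with $a < a+1 < b$; the path $y \to p_a \to p_{a+1} \to \cdots \to p_b$ forces (by transitivity on the clique plus the edge $y\to p_b$) edges $y \to p_i$ for $a \le i \le b$; in particular, combined with $p_a \to x \to p_b$ and the semi-transitivity applied to $y \to p_a \to x \to p_b$ forcing $y \to x$, one then examines $y \to p_{b} $ versus $y \to x \to p_b$ and $p_a \to x$; the actual obstruction, which I would pin down carefully, is that $x$ must then be inserted into the transitive tournament structure in a way incompatible with being type C — i.e. $x$ would be forced to have an edge to some $p_i$ with $a < i < b$, contradicting that $p_a$ is its last in-neighbour and $p_b$ its first out-neighbour. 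I expect this bookkeeping — choosing exactly the right path of length $3$ and reading off which shortcut edge it forces on $x$ — to be the main obstacle.

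Next I would handle the type-C case. Suppose $y \in E_{n-m}$ is of type C with, say, $I_y \supseteq \{p_a, p_b\}$ (the case $O_y \supseteq \{p_a, p_b\}$ is symmetric under edge reversal). Then $p_a \to y$ and $p_b \to y$. Since $|I_y| \ge b$, vertex $y$ receives edges from $p_1, \ldots, p_b$ at least. Now consider the path $p_a \to x \to p_b \to y$: it has length $3$ and its endpoints $p_a, y$ are joined by $p_a \to y$, so semi-transitivity forces $p_a \to x$ (already present) and also the shortcut $x \to y$, i.e. $y$ must be in $O_x$-world, but more importantly consider instead $p_a \to x \to p_b$ and $p_b \to y$: the path $p_a \to x \to p_b \to y$ forces $x \to y$ as noted. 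Simultaneously, since $p_a \in I_y$ and in fact everything up to $p_b$ is in $I_y$, we have $p_{a} \to y$ and $p_b \to y$; but $p_a \to p_b$ is an edge, and $x \to y$ with $p_a \to x$ gives the path $p_a \to x \to y$, fine. The genuine contradiction: from $x \to y$ and $p_b \to y$ we get that $p_b$ and $x$ both point into $y$; consider whether $p_b \to x$ or $x \to p_b$ — we have $x \to p_b$ since $p_b \in O_x$ — so $x \to p_b \to y$ and the edge $x\to y$; now take a clique vertex $p_c$ with $a < c < b$ if one exists, or handle $b = a+1$ directly; the path $p_a \to p_{a+1} \to \cdots \to p_{b-1} \to x$? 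No such edge unless $p_{b-1}=p_a$. Ultimately I would derive that $y$ is forced to be adjacent to $x$ \emph{and} that this adjacency together with $y$'s type-C structure and $x$'s type-C structure produces a directed triangle or forces $x$ to acquire a neighbour among $\{p_{a+1}, \ldots, p_{b-1}\}$, contradicting type C exactly as in the first case. To make the argument uniform I would prove a small auxiliary claim first: \emph{if $x$ is type C with last in-neighbour $p_a$ and first out-neighbour $p_b$, then $x$ has no neighbour among $p_{a+1}, \ldots, p_{b-1}$}, which is immediate from Theorem~\ref{semi-tran-groups} but worth isolating; then each forbidden configuration is reduced to forcing such a neighbour via a length-$3$ path, and the whole proof becomes four short symmetric cases.
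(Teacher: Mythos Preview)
The paper does not actually prove this theorem; it is quoted from \cite{KLMW17} without proof. So there is no ``paper's own proof'' to compare against. Nevertheless, your attempt can be evaluated on its own.

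There is a genuine gap, and it is a missed triviality rather than a missing idea. You correctly write down the path $y \to p_a \to x \to p_b$ (for $y$ of type A), observe the edge $y \to p_b$, and correctly conclude that semi-transitivity forces the shortcut edge $y \to x$. At that point you are done: both $x$ and $y$ lie in the independent set $E_{n-m}$, so there is \emph{no} edge between them, and the forced edge $y \to x$ is already the contradiction. You instead write ``But then $y$ and $x$ are adjacent'' and keep going, hunting for a contradiction involving clique vertices $p_c$ with $a < c < b$. That detour is not only unnecessary, it does not work when $b = a+1$ (which is perfectly possible for a type-C vertex), and your write-up visibly struggles there.

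The same one-line finish applies to every case. For $y$ of type B, or of type C with $\{p_a,p_b\}\subseteq I_y$, the path $p_a \to x \to p_b \to y$ together with the edge $p_a \to y$ forces $x \to y$; for $y$ of type C with $\{p_a,p_b\}\subseteq O_y$, the path $y \to p_a \to x \to p_b$ together with $y \to p_b$ forces $y \to x$. In each case the forced edge lies inside the independent set, contradiction. Your auxiliary claim about $x$ having no neighbour among $p_{a+1},\ldots,p_{b-1}$ is true but irrelevant; the argument never needs it.
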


One can now classify semi-transitive orientations on split graphs.

\begin{theorem}[\cite{KLMW17}]\label{main-orientation} An orientation of a split graph $S_n=(E_{n-m},K_m)$ is semi-transitive iff 
\begin{itemize} 
\item $K_m$ is oriented transitively,
\item each vertex in $E_{n-m}$ is of one of the three types in Theorem~\ref{semi-tran-groups}, 
\item the restrictions in Theorem~\ref{relative-order} are satisfied. 
\end{itemize}\end{theorem}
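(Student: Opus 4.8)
The plan is to prove Theorem~\ref{main-orientation} by showing both implications, with the forward direction being essentially a repackaging of the three preceding theorems and the backward direction carrying the real content.

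\medskip

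\noindent\textbf{Forward direction.} Suppose $S_n=(E_{n-m},K_m)$ is oriented semi-transitively. Since a semi-transitive orientation is by definition acyclic, Lemma~\ref{lem-tran-orie} immediately gives that the restriction to $K_m$ is transitive with a unique source and a unique sink; hence $K_m$ has a Hamiltonian directed path, which is the longest directed path $\vec{P}=p_1\to\cdots\to p_m$. This establishes the first bullet. The second bullet is exactly the statement of Theorem~\ref{semi-tran-groups}, and the third bullet is exactly Theorem~\ref{relative-order}. So this direction is just a matter of citing the earlier results and noting that they all apply verbatim to a semi-transitively oriented split graph.

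\medskip

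\noindent\textbf{Backward direction.} This is where the work lies. Assume the three bullet conditions hold for an orientation of $S_n$; we must show the orientation is semi-transitive, i.e.\ acyclic and free of \emph{shortcuts} (a directed path $u_1\to\cdots\to u_k$ with an edge $u_1\to u_k$ but missing some edge $u_i\to u_j$). Acyclicity is easy: every vertex of $E_{n-m}$ is a source or a sink in a type-A/B vertex, and for a type-C vertex all incident edges go from a prefix of $\vec{P}$ into $v$ and from $v$ out to a suffix of $\vec{P}$; since $K_m$ itself is acyclic (transitive), any directed cycle would have to stay inside $K_m$, a contradiction. For the shortcut condition, the key observation is that any directed path in $S_n$ visits the independent set $E_{n-m}$ in a very restricted way: a type-A vertex can only be the first vertex of the path, a type-B vertex only the last, and a type-C vertex $v$ can appear in the interior but then the path enters $v$ from some $p_a$ with $a\le |I_v|$ and leaves to some $p_b$ with $b\ge m-|O_v|+1$. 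One then does a case analysis on the endpoints $u_1,u_k$ of a hypothetical shortcut: if both lie in $K_m$ the path must be internally within $K_m$ except possibly for one type-C vertex, and transitivity of $K_m$ together with the prefix/suffix structure of $I_v,O_v$ forces all the chords $u_i\to u_j$ to be present; if $u_1$ or $u_k$ lies in $E_{n-m}$, the type constraints (Theorem~\ref{semi-tran-groups}) pin down the edge $u_1\to u_k$ and the adjacency sets, and the restrictions of Theorem~\ref{relative-order} are precisely what is needed to rule out the remaining bad configurations — that is, Theorem~\ref{relative-order} was reverse-engineered so that its two clauses kill exactly the two families of potential shortcuts (one through a type-A/B vertex, one through a type-C vertex) that the other conditions do not already exclude.

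\medskip

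I expect the main obstacle to be the bookkeeping in the shortcut analysis when the shortcut path passes through a type-C vertex: one has to track the index ranges $I_x,O_x,I_y,O_y$ for two independent-set vertices simultaneously and check that every intermediate edge forced by the shortcut hypothesis is actually present, invoking both the ``prefix/suffix'' shape of these sets and both clauses of Theorem~\ref{relative-order}. A clean way to organize this is to first prove a short auxiliary lemma describing all possible shapes of a directed path in $S_n$ (at most one type-A vertex at the start, at most one type-B vertex at the end, type-C vertices only in positions where their entry index is small and exit index is large), and then reduce every shortcut case to a statement purely about indices along $\vec{P}$, where acyclicity and transitivity of $K_m$ close the argument.
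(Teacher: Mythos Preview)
The paper does not prove this theorem: it is quoted verbatim from \cite{KLMW17} as part of the background summary in Section~\ref{split-gr-summary}, with no argument supplied. So there is no proof in the present paper to compare your proposal against.

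That said, your outline is correct and would lead to a complete proof once the bookkeeping is filled in. One point of organization deserves correction. In the backward direction you assert that when both endpoints $u_1,u_k$ lie in $K_m$ ``the path must be internally within $K_m$ except possibly for one type-C vertex,'' and you attribute this to transitivity of $K_m$ together with the prefix/suffix shape of $I_v,O_v$. That claim is true, but it already requires Theorem~\ref{relative-order}, not just the structural description from Theorem~\ref{semi-tran-groups}. Without Theorem~\ref{relative-order} one can have a directed path $p_a\to v\to p_b\to w\to p_c$ through two type-C vertices $v,w$ whenever $m-|O_v|+1\le b\le |I_w|$; since $v,w\in E_{n-m}$ are non-adjacent, the edge $v\to w$ is missing and $p_a\to\cdots\to p_c$ together with the chord $p_a\to p_c$ would be a genuine shortcut. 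It is exactly the second clause of Theorem~\ref{relative-order} (applied with $x=v$, $y=w$) that rules this out, since then $I_w$ would contain both $p_{|I_v|}$ and $p_{m-|O_v|+1}$. So Theorem~\ref{relative-order} must be invoked already in the ``both endpoints in $K_m$'' case to bound the number of independent-set vertices on the path to at most one, not only in the cases where an endpoint lies in $E_{n-m}$. Once you move that invocation forward, the rest of your case analysis (and the auxiliary lemma you propose about the shape of directed paths) goes through cleanly.
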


The following corollary of Theorem~\ref{main-orientation} generalizes Theorem~\ref{thm-K-Tri-m-w-r} 
. 

\begin{corollary}[\cite{KLMW17}]\label{K-ell-k} Let the split graph $K_{\ell}^k$ be obtained from the complete graph $K_{\ell}$, whose vertices are drawn on a circle, by adding $\ell$ vertices so that 
\begin{itemize}
\item each such vertex is connected to $k$ consecutive (on the circle) vertices in $K_{\ell}$; 
\item neighbourhoods of all these vertices are distinct; and
\item $\ell\geq 2k-1$.
\end{itemize}
Then $K_{\ell}^k$ is word-representable. \end{corollary}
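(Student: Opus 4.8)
The plan is to exhibit an explicit semi-transitive orientation of $K_\ell^k$ and then invoke Theorems~\ref{key-thm} and~\ref{main-orientation}. Label the vertices of the underlying $K_\ell$ by $1,\ldots,\ell$ in circular order, and orient $K_\ell$ transitively as $1\to 2\to\cdots\to\ell$, so the longest directed path is $\vec P = 1\to 2\to\cdots\to\ell$. Each added vertex $v$ has neighbourhood a set of $k$ consecutive (cyclically) vertices of $K_\ell$; write this neighbourhood as an arc $[a,a+k-1]$ read cyclically. If the arc does not wrap around the ``seam'' between $\ell$ and $1$, i.e. $a+k-1\le \ell$, then its neighbourhood is the genuine interval $\{a,\ldots,a+k-1\}$ of $\vec P$, and I will make $v$ a \emph{source} (type~A) if $a$ is ``small'' and a \emph{sink} (type~B) otherwise — more precisely, I would like to split the non-wrapping arcs into a left block oriented as type~A and a right block oriented as type~B, with the arcs that do wrap around the seam handled as type~C vertices (for a wrapping arc $\{a,\ldots,\ell,1,\ldots,b\}$ one sets $I_v=\{1,\ldots,b\}$ and $O_v=\{a,\ldots,\ell\}$, which is exactly the type~C shape). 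The hypothesis $\ell\ge 2k-1$ is what guarantees that for a wrapping arc the ``in-part'' $\{1,\ldots,b\}$ and the ``out-part'' $\{a,\ldots,\ell\}$ are disjoint, so $I_v$ and $O_v$ are well separated along $\vec P$; without it type~C would be impossible.

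Concretely I would order the $\ell$ distinct $k$-arcs by their starting point and assign types so that, going around the circle once, we see a contiguous run of type~C vertices (those whose arc straddles the seam), then type~A vertices, then type~B vertices, arranged so that the ``turning points'' $p_{|I_x|}$ and $p_{m-|O_x|+1}$ of the type~C vertices march monotonically across $\vec P$ and never coincide. With $\ell\ge 2k-1$ there are at most $k-1$ arcs that wrap the seam, and for each such arc $x$ we have $|I_x|\le k-1$ and $|O_x|\le k-1$, so $|I_x|<\ell-(k-1)+1\le m-|O_x|+1$, i.e. the two turning vertices are distinct and in the right order.

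The three conditions of Theorem~\ref{main-orientation} must then be checked. The first (transitivity of $K_m$) is immediate by construction. The second (every vertex of $E_{n-m}$ is of type A, B, or C) holds by the case analysis above: a non-wrapping arc gives a true interval of $\vec P$, which we orient as a source or a sink; a wrapping arc gives the type~C pattern. The third condition — the relative-order restrictions of Theorem~\ref{relative-order} — is the real content, and I expect it to be the main obstacle. We must verify that for each type~C vertex $x$ there is no type~A or type~B vertex adjacent to \emph{both} of the turning vertices $p_{|I_x|}$ and $p_{m-|O_x|+1}$, and no type~C vertex $y$ with $I_y$ or $O_y$ containing both of them. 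Here the distances are forced by $\ell\ge 2k-1$: the gap between the two turning vertices of a type~C vertex exceeds $k-1$ (since the arc occupies $k$ vertices but ``on the far side'' leaves more than $k-1$ vertices free), so no single $k$-arc — whether it be a type~A/B interval or a type~C in- or out-part, all of which have size at most $k-1$ in the relevant direction — can cover both turning vertices. This is a short counting argument once the orientation is pinned down, but laying out the bookkeeping of which arcs are type~A versus type~B versus type~C so that all turning points stay monotone is where the care is needed; the rest is routine verification, and the conclusion follows from Theorem~\ref{key-thm}.
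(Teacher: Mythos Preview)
Your approach is correct and is exactly the intended one: the paper presents this result as a direct corollary of Theorem~\ref{main-orientation} (citing \cite{KLMW17} for details), and your argument---orienting $K_\ell$ transitively, making non-wrapping $k$-arcs type~A/B and wrapping arcs type~C, then checking that $\ell\ge 2k-1$ forces the two turning vertices $p_{|I_x|}$ and $p_{m-|O_x|+1}$ of any type~C vertex $x$ to be at distance $\ell-k+1\ge k$, too far apart for any $k$-arc or any type~C in/out-part (each of size $\le k-1$) to contain both---is precisely that verification.

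One simplification: your concern about splitting the non-wrapping arcs into a ``left A-block'' and ``right B-block'' and keeping ``turning points monotone'' is unnecessary. Theorem~\ref{relative-order} depends only on the \emph{neighbourhoods} of the type~A/B vertices, not on which of the two types they are, and Theorem~\ref{intercahnging} confirms that A and B are freely interchangeable. So you may simply declare every non-wrapping arc to be type~A and every wrapping arc to be type~C; the counting argument you give then finishes the proof with no further bookkeeping.
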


The following theorem allows us to treat vertices of types A or B in the same way and to refer to them as vertices of type A\&B. 

\begin{theorem}[\cite{KLMW17}]\label{intercahnging} 
Let $S_n=(E_{n-m},K_m)$ be semi-transitively oriented. Then, any vertex in $E_{n-m}$ of type A can be replaced by a vertex of type B, and vice versa, keeping orientation semi-transitive. 
\end{theorem}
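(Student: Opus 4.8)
The plan is to show that swapping the "direction" of a type A vertex (a source) to make it a type B vertex (a sink), or vice versa, never destroys semi-transitivity, by invoking the structural classification in Theorem~\ref{main-orientation}. Concretely, suppose $x\in E_{n-m}$ is of type A in a semi-transitive orientation of $S_n$, so $x$ is a source connected to $\{p_i,\ldots,p_j\}$ for some $1\le i\le j\le m$. Form a new orientation by reversing all edges at $x$, so that now $p_i\to x,\ldots,p_j\to x$; the orientation of $K_m$ and of all edges not incident to $x$ is unchanged. I claim the new orientation satisfies all three bullets of Theorem~\ref{main-orientation}, hence is semi-transitive, which is exactly what we want (and the reverse direction, B to A, is symmetric).

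**First** I would check the easy bullets. The orientation of $K_m$ is untouched, so it is still transitive with the same longest directed path $\vec{P}=p_1\to\cdots\to p_m$. The vertex $x$ is now a sink adjacent to the same interval $\{p_i,\ldots,p_j\}$, so it is of type B; every other vertex of $E_{n-m}$ has the same incident edges as before and therefore retains its type. So the first two bullets hold. **The substantive step** is the third bullet: verifying that the restrictions of Theorem~\ref{relative-order} still hold. These restrictions constrain, for each type C vertex $x'$, which vertices may be simultaneously adjacent to $p_{|I_{x'}|}$ and $p_{m-|O_{x'}|+1}$. Since type C vertices and their sets $I_{x'},O_{x'}$ are unaffected by our change, the only thing that could go wrong is the clause "there is no vertex $y\in E_{n-m}$ of type A or B connected to both $p_{|I_{x'}|}$ and $p_{m-|O_{x'}|+1}$" — but this clause treats types A and B identically, and changing $x$ from type A to type B does not change which vertices of $K_m$ it is adjacent to. Hence the set of pairs $(p_a,p_b)$ forbidden by Theorem~\ref{relative-order}, and the set of vertices violating the forbidden configuration, is literally identical before and after. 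So the third bullet is preserved.

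**The main obstacle** — really the only place one must be careful — is making sure that Theorem~\ref{relative-order} genuinely does not distinguish A from B, i.e.\ that no part of the semi-transitivity obstruction depends on $x$ being a source rather than a sink once its neighbourhood in $K_m$ is a fixed interval. I would spell this out by recalling why a type A or type B vertex is "harmless": a source $x$ adjacent to the interval $\{p_i,\ldots,p_j\}$ lies on no directed path through two of its neighbours in a way that creates a shortcut violation, because all its edges point out; a sink is the mirror image, all edges point in; in both cases any directed path touching $x$ has $x$ as an endpoint, so the semi-transitivity condition for paths through $x$ is vacuous, and the only constraints are the "global" ones among $K_m$-neighbours recorded in Theorem~\ref{relative-order}, which as noted are A/B-symmetric. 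With that observation in hand, the proof is just the three-bullet check above, and the statement follows by also noting we may apply the operation to each type A or type B vertex independently since the argument never used anything about the other vertices of $E_{n-m}$.
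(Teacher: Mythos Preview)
Your argument is correct: reversing all edges at a type A vertex $x$ leaves the transitive orientation of $K_m$ untouched, turns $x$ into a type B vertex with the same interval neighbourhood, leaves all other vertices' types intact, and---crucially---the constraints in Theorem~\ref{relative-order} mention type A and type B vertices only through the phrase ``of type A or B'' together with their neighbourhoods in $K_m$, so the set of violations is literally unchanged. Invoking Theorem~\ref{main-orientation} then finishes the job.

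As for comparison: the present paper does not supply its own proof of this statement---it is quoted from \cite{KLMW17} without argument---so there is nothing in the paper to compare your route against. Your proof via the three-bullet characterization of Theorem~\ref{main-orientation} is the natural one and is exactly the kind of argument the characterization is designed to enable.
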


\section{Threshold graphs and split graphs with restricted vertex degree in the clique}\label{threshold-sec}

A {\em threshold graph} is a graph that can be constructed from the one-vertex graph by repeated applications of the following two operations:
\begin{itemize}
\item[(1)] Addition of a single isolated vertex to the graph.
\item[(2)] Addition of a single dominating vertex to the graph, i.e.\ a single vertex that is connected to all other vertices.
\end{itemize}
It is not difficult to see that any threshold graph is a split graph. 

\begin{theorem}\label{thm-threshold} Any threshold graph $S_n$ is word-representable. \end{theorem}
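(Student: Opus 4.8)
The plan is to prove the statement by induction on the number of vertices $n$, using the structural construction of threshold graphs and the tools from Section~\ref{split-gr-summary}. The base case $n=1$ is trivial since a single vertex is word-representable. For the inductive step, we look at the last operation used to build $S_n$ from $S_{n-1}$. If that operation was the addition of an isolated vertex (operation (1)), then $S_n$ is obtained from the word-representable graph $S_{n-1}$ by adding a vertex of degree $0$, and Lemma~\ref{lemma-assumptions} immediately gives that $S_n$ is word-representable. So the only real content is the case where the last operation was the addition of a dominating vertex (operation (2)).

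For the dominating-vertex case, I would set up the induction more carefully so that the hypothesis carries enough information. Observe that in a threshold graph, one can always order the construction so that all dominating-vertex additions come in a block, or, more usefully, one can identify the clique $K_m$ of the split partition explicitly: the dominating vertices added during the construction, together with one vertex, form the clique, while the isolated-vertex additions populate the independent set. The cleanest route is to directly exhibit a semi-transitive orientation using Theorem~\ref{main-orientation}. Orient the clique $K_m$ transitively along a path $\vec{P}= p_1\to\cdots\to p_m$ according to the reverse order in which the clique vertices were added as dominating vertices (the earliest-added dominating vertex is adjacent to everything, so it is a natural endpoint). Then every vertex $v$ in the independent set $E_{n-m}$ was added as an isolated vertex at some stage, and its neighbours among the clique are exactly those dominating vertices added \emph{after} $v$; because of the nested structure of threshold graphs, the neighbourhood of $v$ in $\vec{P}$ is an \emph{initial} segment $\{p_1,\ldots,p_j\}$ (or, with the other endpoint convention, a final segment). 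Orient all edges from $v$ into the clique, making $v$ a source connected to $\{p_1,\dots,p_j\}$ — i.e.\ a type~A vertex with $i=1$ in the language of Theorem~\ref{semi-tran-groups}.

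It then remains to check that the restrictions of Theorem~\ref{relative-order} are satisfied. Since we have made every independent-set vertex a type~A vertex (no type~C vertices at all), the second condition of Theorem~\ref{relative-order} is vacuous, and the first condition — which concerns a type~C vertex $x$ and forbids a type~A or B vertex adjacent to both $p_{|I_x|}$ and $p_{m-|O_x|+1}$ — is also vacuous because there are no type~C vertices. Hence by Theorem~\ref{main-orientation} the orientation is semi-transitive, and by Theorem~\ref{key-thm} the graph $S_n$ is word-representable.

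The main obstacle, and the step that needs the most care, is verifying the claim that for each independent-set vertex $v$ the neighbourhood within the transitively oriented clique is a contiguous \emph{initial} segment of $\vec{P}$ — this is where the defining "nested neighbourhood" property of threshold graphs must be invoked, and the ordering of $\vec{P}$ must be chosen consistently with the order of dominating-vertex additions so that "added later" translates to "closer to $p_1$". One should also double-check the edge case $m\le 2$ (cliques of size at most $3$ are already known to be word-representable by the summary in Section~\ref{split-gr-summary}, so this could alternatively be cited to dispatch small cases) and confirm that choosing the split partition $(E_{n-m},K_m)$ with $K_m$ \emph{maximal} is compatible with the construction order. An alternative, possibly shorter, write-up would use Lemma~\ref{lemma-assumptions} repeatedly: adding a dominating vertex to a split graph whose clique is $K_m$ can be viewed as first copying clique vertices and then adding a degree-$1$ or degree-$0$ vertex appropriately, but I expect the direct orientation argument above to be the most transparent.
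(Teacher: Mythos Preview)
Your argument is correct. Ordering the clique vertices by reverse time of addition does make every independent-set neighbourhood an initial segment of $\vec{P}$, so every such vertex is of type~A; with no type~C vertices present, the constraints of Theorem~\ref{relative-order} are vacuous and Theorem~\ref{main-orientation} delivers a semi-transitive orientation. (In your dominating-vertex case the last vertex is adjacent to everything, so no independent-set vertex has degree~$0$ and the type~A template with $1\le i\le j$ genuinely applies.) The induction framing is slightly redundant---the orientation construction works for any threshold graph regardless of which operation came last---but the logic is sound.

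The paper, however, takes a different and much shorter route that never builds an orientation. Its observation is that in a threshold graph labelled $1,\ldots,n$ by construction order, vertices $1$ and $2$ always have the same neighbourhood in $\{3,\ldots,n\}$: each later vertex is either dominating (adjacent to both) or isolated (adjacent to neither). Thus vertex~$1$ is a ``copy'' of vertex~$2$ in the sense of Lemma~\ref{lemma-assumptions} and may be deleted without affecting word-representability; what remains is the threshold graph on $\{2,\ldots,n\}$, and the same argument peels off vertex~$2$, and so on down to a single vertex. Your closing remark about a Lemma~\ref{lemma-assumptions} shortcut is in the right spirit, but the mechanism you sketch (modelling a dominating addition via copies plus a low-degree vertex) is more elaborate than the paper's ``the first two vertices are always twins'' trick. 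Your approach buys an explicit semi-transitive orientation of every threshold graph; the paper's buys a two-sentence proof.
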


\begin{proof} Label the vertices in the order they were added to $S_n$: $1, 2,\ldots,n$.  Note that no matter which operation is applied, the vertices $1$ and $2$ will have the same neighbourhood modulo them possibly being connected to each other. Thus, by Lemma~\ref{lemma-assumptions}, removing vertex $1$ does not affect word-representability of the graph. But then, the vertices $2$ and $3$ will have the same neighbourhood modulo them possibly being connected to each other.  Thus, by Lemma~\ref{lemma-assumptions}, removing vertex $2$ does not affect word-representability of the graph. Continuing in the same way, we see that $S_n$ is word-representable iff the one-vertex graph (labeled by $n$) is word-representable, which is trivially the case.  \end{proof}

\begin{theorem}\label{thm-m+1} Let $S_n=(E_{n-m},K_m)$ be a split graph such that each vertex $v$ in $K_m$ is of degree at most $m$, i.e.\ the degree of $v$ is $m-1$ or $m$. Then $S_n$ is word-representable.\end{theorem}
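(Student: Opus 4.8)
The plan is to construct an explicit semi-transitive orientation of $S_n$ and then invoke Theorems~\ref{main-orientation} and~\ref{key-thm}. First I would dispose of the trivial degenerate case: by Lemma~\ref{lemma-assumptions}, deleting a vertex of degree $0$ from $E_{n-m}$ does not affect word-representability, so I may assume every vertex of $E_{n-m}$ has at least one neighbour.

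The key observation is that the hypothesis on the clique degrees translates into a rigid structure on the independent set. Each vertex $p$ of $K_m$ already has $m-1$ neighbours inside $K_m$, so the condition $\deg(p)\le m$ forces $p$ to have \emph{at most one} neighbour in $E_{n-m}$. Hence, for distinct $w,w'\in E_{n-m}$, the neighbourhoods $N(w)$ and $N(w')$, regarded as subsets of $V(K_m)$, are disjoint; together with the (possibly empty) set $U$ of clique vertices having no neighbour in $E_{n-m}$, the family $\{N(w):w\in E_{n-m}\}\cup\{U\}$ is a partition of $V(K_m)$ into nonempty blocks (plus $U$).

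Given this, I would choose the transitive orientation of $K_m$ so that its longest directed path $\vec P=p_1\to\cdots\to p_m$ lists the blocks $N(w)$ one after another (in any order), with the vertices of $U$ appended at the end; this is legitimate because any linear order on $V(K_m)$ induces an acyclic, hence transitive, orientation of the complete graph $K_m$ with a single source and single sink, and because the blocks are pairwise disjoint. Now orient every vertex $w\in E_{n-m}$ as a source joined to its block $N(w)=\{p_i,p_{i+1},\ldots,p_j\}$, i.e.\ make it a vertex of type A in the sense of Theorem~\ref{semi-tran-groups}. Then $K_m$ is oriented transitively, every vertex of $E_{n-m}$ has one of the three admissible types, and — crucially — there is no vertex of type C, so the restrictions of Theorem~\ref{relative-order} are satisfied vacuously. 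By Theorem~\ref{main-orientation} the orientation is semi-transitive, and by Theorem~\ref{key-thm} the graph $S_n$ is word-representable.

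I do not anticipate a genuine obstacle: the content is entirely in the degree-counting observation of the second paragraph, and the remaining work is just assembling the blocks into a path. The only points needing a line of care are the degenerate cases — isolated vertices of $E_{n-m}$, and a vertex of $E_{n-m}$ adjacent to all of $K_m$ (which by the disjointness is then the unique non-isolated vertex of the independent set) — all of which are covered by the argument above.
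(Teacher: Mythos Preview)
Your proposal is correct and follows essentially the same approach as the paper: both arguments use the degree hypothesis to deduce that each clique vertex has at most one neighbour in $E_{n-m}$, hence the neighbourhoods of the independent-set vertices are pairwise disjoint subsets of $V(K_m)$; both then choose a linear order on $K_m$ making each such neighbourhood a consecutive interval, orient every independent-set vertex as type A (or B), and invoke Theorems~\ref{main-orientation} and~\ref{key-thm}. Your write-up is slightly more explicit (disposing of isolated vertices via Lemma~\ref{lemma-assumptions} and spelling out the partition), but the mathematical content is the same.
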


\begin{proof} Orient $K_m$ in an arbitrary transitive way, which will result, by Lemma~\ref{lem-tran-orie}, in a longest directed path $\vec{P}=p_1\rightarrow\cdots\rightarrow p_m$. Because each vertex in $K_m$ can be connected to at most one vertex in $E_{n-m}$, we can clearly permute the vertices in $\vec{P}$ (resulting in a different transitive orientation of $K_m$) so that the neighbourhood of each vertex in $E_{n-m}$ consists of a number of consecutive vertices in  $\vec{P}$, and these neighbourhoods do not overlap. Making each vertex in $E_{n-m}$ either of type A, or of type B, we can apply Theorem~\ref{main-orientation} to see that $S_n$ is semi-transitively oriented, and thus, by Theorem~\ref{key-thm}, $S_n$ is word-representable. \end{proof}

\begin{theorem}\label{large-ext-degree} Let $S_n=(E_{n-m},K_m)$ be a split graph, where the neighbourhoods of all vertices in $E_{n-m}$ are distinct. If $K_m$ has a vertex $v$ connected to at least  $d+1$ vertices of degree $d\leq m-2$ in $E_{n-m}$, then $S_n$ is not word-representable.  \end{theorem}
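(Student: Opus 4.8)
The plan is to suppose for contradiction that $S_n$ is word-representable, hence by Theorem~\ref{key-thm} it admits a semi-transitive orientation, which by Lemma~\ref{lem-tran-orie} induces a transitive orientation on $K_m$ with a longest directed path $\vec{P}=p_1\rightarrow\cdots\rightarrow p_m$; moreover by Theorem~\ref{semi-tran-groups} every vertex of $E_{n-m}$ is of type A, B or C, and by Theorem~\ref{intercahnging} we may assume there are no type-B vertices (replacing each by a type-A vertex). So every one of the $\geq d+1$ neighbours of $v$ lying in $E_{n-m}$ and having degree $d$ is either of type A\&B or of type C. Say $v=p_t$ in the path $\vec{P}$.

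First I would analyze the type A\&B vertices among these neighbours: such a vertex $x$ of degree $d$ is connected to $d$ consecutive path vertices $\{p_i,\ldots,p_{i+d-1}\}$, and since $v=p_t$ must be among them, there are at most $d$ possible such "windows" containing $p_t$ (namely those starting at $p_{t-d+1},\ldots,p_t$, intersected with the valid range). Because the neighbourhoods of all vertices in $E_{n-m}$ are distinct, there are at most $d$ type A\&B neighbours of $v$ of degree $d$. Next I would analyze the type C vertices among these neighbours. A type C vertex $x$ has $I_x=\{p_1,\ldots,p_{a}\}$ and $O_x=\{p_b,\ldots,p_m\}$ with $a<b$, and its degree is $|I_x|+|O_x|=a+(m-b+1)=d$; for $v=p_t$ to be a neighbour we need $t\leq a$ or $t\geq b$. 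I would count how many such distinct $(I_x,O_x)$ pairs can contain a fixed $p_t$, again getting a bound roughly linear in $d$, and then combine with the type A\&B count. The goal is to show the total number of distinct neighbourhoods of degree-$d$ vertices that include $v$ is at most $d$, so $d+1$ of them is impossible.

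The subtle point — and the main obstacle — is that a naive count of windows gives a bound like $d$ for type A\&B alone and another $d$-ish bound for type C, which would only rule out $\geq 2d+1$ rather than $\geq d+1$. So the real work is to invoke Theorem~\ref{relative-order} to show that the type C neighbours and the type A\&B neighbours cannot coexist freely: the restriction says that for a type C vertex $x$ no type A or B vertex can be connected to both $p_{|I_x|}$ and $p_{m-|O_x|+1}$, and no other type C vertex can have $I_y$ or $O_y$ containing both of these, and these constraints should force the degree-$d$ neighbours of $v$ to essentially all be of one type and to occupy a very restricted set of at most $d$ distinct neighbourhoods. I would carefully extract from Theorem~\ref{relative-order}, together with the fact that $d\leq m-2$ (so that $p_{|I_x|}$ and $p_{m-|O_x|+1}$ are genuinely distinct path vertices, leaving "room" for the restriction to bite), that any two of these $d+1$ neighbours already violate a constraint or coincide.

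Concretely, I expect the cleanest route is: among the $\geq d+1$ degree-$d$ neighbours of $v=p_t$, pick two, and by pigeonhole / the distinctness of neighbourhoods show their neighbourhoods $N_1\neq N_2$ are "nested or crossing" in a way that produces either a vertex connected to both $p_{|I_x|}$ and $p_{m-|O_x|+1}$ for one of the associated type C structures, contradicting Theorem~\ref{relative-order}, or else produces a forbidden induced subgraph of the $A_\ell$ type (Theorem~\ref{Aell-min-non-repres}) sitting inside $S_n$. In fact the hypothesis $d\leq m-2$ together with $d+1$ neighbours of a single clique vertex strongly suggests that one finds a copy of $A_{d+2}$ (or $K^{\triangle}$-based obstruction) as an induced subgraph: $v$ together with $d+1$ independent vertices each adjacent to $v$ and to $d-1$ further clique vertices is exactly the kind of "overloaded" configuration that $A_\ell$ forbids. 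The hard part will be doing the bookkeeping on which clique vertices the $d+1$ independent neighbours attach to so as to exhibit the forbidden induced subgraph cleanly, but once the right two (or three) of the $d+1$ vertices are selected, the contradiction with either Theorem~\ref{relative-order} or Theorem~\ref{Aell-min-non-repres} should be immediate.
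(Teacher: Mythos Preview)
Your setup is correct, but you are overcomplicating the count and missing the one observation that turns this into a two-line proof. The neighbourhood of a type A\&B vertex of degree $d$ is a \emph{non-wrapping} interval $\{p_i,\ldots,p_{i+d-1}\}$ on $\vec{P}$, while the neighbourhood of a type C vertex of degree $d$ is $I_x\cup O_x=\{p_1,\ldots,p_a\}\cup\{p_b,\ldots,p_m\}$, which is precisely a \emph{wrapping} interval of length $d$ on the cyclic arrangement $p_1,\ldots,p_m,p_1$. Thus in either case the neighbourhood is a consecutive \emph{cyclic} interval of length $d$, and conversely every cyclic interval of length $d$ is of exactly one of these two kinds (never both, since $d\leq m-2<m$). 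Now the number of distinct cyclic intervals of length $d$ containing the fixed vertex $v=p_t$ is exactly $d$: they are the intervals starting at $p_{t-d+1},\ldots,p_t$ read cyclically. Since the $d+1$ neighbours of $v$ in $E_{n-m}$ have pairwise distinct neighbourhoods, each a cyclic interval of length $d$ through $v$, you already have the contradiction.

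Your worry about a ``$2d$'' bound arises only because you bounded the two types separately by $\leq d$ each; in fact the non-wrapping intervals through $p_t$ and the wrapping intervals through $p_t$ partition the $d$ cyclic intervals through $p_t$, so their counts sum to exactly $d$, not $2d$. No appeal to Theorem~\ref{relative-order} and no hunt for an induced $A_\ell$ is needed. This unified cyclic-interval count is exactly the paper's argument, stated there as ``$v$ can be covered by at most $d$ distinct intervals of $d$ vertices''.
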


\begin{proof} Supposed $S_n$ is word-representable, so that $S_n$ can be oriented semi-transitively by Theorem~\ref{key-thm}. Let $v_1,v_2,\ldots,v_{d+1}\in E_{n-m}$ be vertices of degree $d$ connected to $v$. By Theorem~\ref{main-orientation}, the distinct neighbourhoods of $v_i$, $1\leq i\leq d+1$, form consecutive cyclic intervals of $d$ vertices on the directed path $\vec{P}$, each of which contains $v$. Contradiction with the fact that $v$ can be covered by at most $d$ distinct intervals of $d$ vertices. \end{proof}

\section{Properties of degrees in the independent sets in word-representable split graphs}\label{degree-sec}

An immediate corollary of Lemma~\ref{lemma-assumptions} is that in our studies of word-representable split graphs $S_n=(E_{n-m},K_m)$ we can assume that 
\begin{itemize} 
\item no two vertices in $S_n$ have the same set of neighbours modulo vertices being connected to each other, so that
\item at most one vertex in $K_m$ is not connected to any vertex in $E_{n-m}$, and
 each vertex in $E_{n-m}$ is of degree at least 2.
\end{itemize}

However, when studying minimal non-word-representable subgraphs of a split graph, we can make other assumptions as well, which allow a reduction of the space of possible solutions, e.g.\ when  proceeding with a computer-aided search. The following two theorems are very useful.  

\begin{theorem}\label{degree-thm1} Let $S_n=(E_{n-m},K_m)$ be a word-representable graph, $m\geq 3$, and $2\leq d\leq \frac{m+1}{2}$. Then, $E_{n-m}$ contains at most $m$ vertices of degree $d$ whose neighbourhoods are distinct. This bound is achievable. \end{theorem}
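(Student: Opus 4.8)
The plan is to use the structural characterization of semi-transitive orientations of split graphs (Theorem~\ref{main-orientation}) together with the description of the three vertex types (Theorem~\ref{semi-tran-groups}) and the interchangeability of types A and B (Theorem~\ref{intercahnging}). Suppose $S_n$ is word-representable; fix a semi-transitive orientation, and let $\vec{P}=p_1\rightarrow\cdots\rightarrow p_m$ be the longest directed path in $K_m$. Each vertex of $E_{n-m}$ of degree $d$ is of type A, B, or C. A type-A or type-B vertex of degree $d$ has neighbourhood an interval $\{p_i,\ldots,p_{i+d-1}\}$ of exactly $d$ consecutive vertices of $\vec{P}$; since the graph has $m$ vertices on $\vec{P}$, there are exactly $m-d+1$ such intervals, so there are at most $m-d+1$ type-A\&B vertices of degree $d$ with distinct neighbourhoods. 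A type-C vertex $v$ of degree $d$ has neighbourhood $I_v\cup O_v$ with $|I_v|=a$, $|O_v|=d-a$, where $I_v$ is a prefix $\{p_1,\ldots,p_a\}$ and $O_v$ is a suffix $\{p_{m-(d-a)+1},\ldots,p_m\}$, and the type-C condition forces $a\geq 1$, $d-a\geq 1$, and $a < m-(d-a)+1$, i.e.\ the prefix and suffix are disjoint; so $a$ ranges in $\{1,\ldots,d-1\}$, giving at most $d-1$ distinct type-C neighbourhoods of degree $d$.

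Naively adding these gives $(m-d+1)+(d-1)=m$ distinct neighbourhoods of degree $d$, which already yields the bound $m$. The only subtlety is to make sure the type-C and type-A\&B neighbourhoods we are counting are genuinely distinct as \emph{sets}: a type-C neighbourhood of degree $d$ is a prefix of size $a\in\{1,\dots,d-1\}$ together with a disjoint suffix, hence it is never an interval of $d$ consecutive vertices (its two blocks are separated by at least one vertex of $\vec P$ since $a<m-(d-a)+1$), so no type-C neighbourhood coincides with a type-A\&B one, and the two counts add. This gives at most $m$ vertices of degree $d$ with distinct neighbourhoods whenever a semi-transitive orientation exists, i.e.\ whenever $S_n$ is word-representable, proving the upper bound. (The hypothesis $2\le d\le \frac{m+1}{2}$ should enter here: it guarantees $d-1\ge 1$ so type-C neighbourhoods of degree $d$ actually exist, and it ensures $d\le m-d+1$ so that, in particular, all the type-C prefixes/suffixes can be realized; I would double-check that the restrictions of Theorem~\ref{relative-order} do not force one to discard any of the $m$ candidate neighbourhoods simultaneously — but since those restrictions only forbid \emph{certain pairs} from coexisting, and here we are bounding a single degree class, I expect them to be irrelevant to the upper bound and relevant only to achievability.)

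For achievability, I would exhibit a word-representable split graph $S_n$ with $K_m$ and exactly $m$ vertices of degree $d$ with distinct neighbourhoods. The natural candidate is to take all $m-d+1$ "interval" neighbourhoods of size $d$ realized by type-A (equivalently type-B) vertices, plus all $d-1$ "prefix $+$ disjoint suffix" neighbourhoods of total size $d$ realized by type-C vertices, and check that the resulting orientation satisfies Theorem~\ref{main-orientation}; the type-A\&B part is immediate, and for the type-C part one checks the two conditions of Theorem~\ref{relative-order} against the chosen type-A\&B vertices. A cleaner route may be to observe that the type-A\&B part alone is the graph $K_m^d$ from Corollary~\ref{K-ell-k} (with $\ell = m$, $k=d$; note $m\ge 2d-1$ follows from $d\le\frac{m+1}{2}$), which is already word-representable with $m-d+1$ distinct degree-$d$ neighbourhoods, and then add the $d-1$ type-C vertices one at a time, verifying at each step that Theorem~\ref{main-orientation} still applies.

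The main obstacle I anticipate is the achievability direction, specifically the bookkeeping in Theorem~\ref{relative-order}: one must choose which $a\in\{1,\dots,d-1\}$ type-C vertices to include and verify that none of them is "blocked" by the interval vertices $p_i,\dots,p_{i+d-1}$ already present, and that no two included type-C vertices block each other. I expect this to work out precisely because $d$ is small ($d\le\frac{m+1}{2}$), which keeps the forbidden vertex pairs $\{p_{|I_x|},\,p_{m-|O_x|+1}\}$ spread far enough apart along $\vec P$ to avoid all conflicts, but confirming this is the step that requires genuine case analysis rather than a one-line argument.
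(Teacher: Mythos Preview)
Your upper-bound argument is correct and is essentially the paper's proof unpacked into cases. The paper compresses your two counts into a single observation: by Theorem~\ref{main-orientation}, the neighbourhood of \emph{any} vertex in $E_{n-m}$ is a set of consecutive vertices on $\vec P$ \emph{read cyclically}---a type-A\&B vertex gives an ordinary interval, and a type-C vertex gives a ``wrap-around'' interval (your prefix\,$+$\,suffix). There are exactly $m$ cyclic intervals of length $d$, so the bound is $m$, with no need to separate cases or to argue that the two families of sets are disjoint.

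For achievability you are closer than you realise, but you have misread Corollary~\ref{K-ell-k}. The graph $K_m^d$ defined there has $m$ independent vertices, not $m-d+1$: ``$k$ consecutive (on the circle) vertices'' means \emph{cyclically} consecutive, so $K_m^d$ already realises all $m$ cyclic intervals of length $d$---precisely your ``natural candidate'' of $m-d+1$ ordinary intervals together with the $d-1$ prefix\,$+$\,suffix sets. The hypothesis $d\le\frac{m+1}{2}$ is exactly the condition $\ell\ge 2k-1$ of that corollary (with $\ell=m$, $k=d$), so $K_m^d$ is word-representable and achieves the bound outright. The ``bookkeeping in Theorem~\ref{relative-order}'' that you flag as the main obstacle is therefore already absorbed into Corollary~\ref{K-ell-k}, and no further case analysis is required.
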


\begin{proof} By Theorem~\ref{key-thm}, $S_n$ admits a semi-transitive orientation, in which the neighbourhoods of the vertices in $E_{n-m}$, by Theorem~\ref{main-orientation}, are consecutive on the directed path $\vec{P}$ when read cyclicly. There are $m$ distinct consecutive (cyclic) intervals of length $d$, which gives the upper bound. Finally, since $d\leq \frac{m+1}{2}$, the restrictions in Theorem~\ref{relative-order} are satisfied, which makes the bound achievable (letting $\ell=m$ and $k=d$ in Corollary~\ref{K-ell-k}, we obtain the graph achieving the bound).\end{proof}

\begin{theorem}\label{degree-thm2} Let $S_n=(E_{n-m},K_m)$ be a word-representable graph, $m\geq 4$, and $\frac{m+1}{2} < d\leq m-1$. Then, $E_{n-m}$ contains at most $m-d+1$ vertices of degree $d$ whose neighbourhoods are distinct. This bound is achievable. \end{theorem}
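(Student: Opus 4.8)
The plan is to mirror the structure of the proof of Theorem~\ref{degree-thm1}, but now paying attention to the restrictions in Theorem~\ref{relative-order}, which become binding precisely when $d > \frac{m+1}{2}$. As before, by Theorem~\ref{key-thm} the graph $S_n$ admits a semi-transitive orientation, and by Theorem~\ref{main-orientation} each vertex of $E_{n-m}$ has a neighbourhood that is a consecutive cyclic interval of length $d$ on the directed path $\vec{P}=p_1\rightarrow\cdots\rightarrow p_m$. The first step is to recall that there are only two kinds of such vertices to worry about, since by Theorem~\ref{intercahnging} types A and B can be treated uniformly as type A\&B, leaving type A\&B and type C. A type C vertex $v$ has $I_v=\{p_1,\ldots,p_i\}$ and $O_v=\{p_j,\ldots,p_m\}$ with $j=i+1$ (its neighbourhood has size $d=|I_v|+|O_v|$ and is the cyclic interval $\{p_j,\ldots,p_m,p_1,\ldots,p_i\}$, i.e.\ the interval "wrapping around" through $p_m p_1$), while a type A\&B vertex has neighbourhood a genuine (non-wrapping) sub-interval $\{p_a,\ldots,p_{a+d-1}\}$ of $\vec{P}$.

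Next I would extract, from Theorem~\ref{relative-order}, the combinatorial obstruction. For a type C vertex $x$ with $|I_x|=i$ and $|O_x|=m-j+1$, the two "critical" vertices are $p_{|I_x|}=p_i$ and $p_{m-|O_x|+1}=p_j$. Note $p_i$ and $p_j=p_{i+1}$ are the two endpoints of the "short arc" complementary to $x$'s neighbourhood, i.e.\ the $m-d$ vertices of $\vec{P}$ \emph{not} adjacent to $x$ are exactly $\{p_{i+1},\ldots,p_{j-1}\}$ together with — wait, one must be careful — actually the non-neighbours of a type C vertex $x$ form the contiguous block strictly between $p_{|I_x|}$ and $p_{m-|O_x|+1}$ along $\vec P$, a block of size $(m-|O_x|+1)-|I_x|-1 = m-d-1$ when the two critical vertices themselves are neighbours. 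The key point the theorem gives is: no \emph{other} vertex of the independent set may have a neighbourhood interval containing both critical vertices $p_{|I_x|}$ and $p_{m-|O_x|+1}$ (for type A\&B this is stated directly; for another type C vertex $y$ it is forbidden that $I_y$ or $O_y$ contains both). So the second step is to translate "contains both critical vertices" into a statement about which length-$d$ cyclic intervals are allowed to coexist, and to count how many length-$d$ cyclic intervals can be chosen so that this pairwise constraint is respected.

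The heart of the argument is the counting step. When $d\leq\frac{m+1}{2}$ one can show the constraint is vacuous (as used in Theorem~\ref{degree-thm1}), but when $d>\frac{m+1}{2}$, the complement of each neighbourhood is a cyclic interval of length $m-d < \frac{m-1}{2}$, and the constraint essentially forces the complements to be pairwise "non-nested/non-overlapping in the relevant sense," limiting the number of feasible neighbourhoods to $m-d+1$. Concretely, I expect the clean way is to fix one vertex, say of type A\&B with neighbourhood $\{p_1,\ldots,p_d\}$ (using the freedom to relabel/rotate $\vec P$, or by first disposing of the case where all vertices are type C, which is even more restricted by the second half of Theorem~\ref{relative-order}), observe its critical-type role forces every other neighbourhood interval to avoid "straddling" a fixed pair of positions, and deduce the admissible intervals are exactly the $m-d+1$ windows $\{p_t,\ldots,p_{t+d-1}\}$ for $t$ in a range of length $m-d+1$. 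For achievability, I would exhibit the obvious construction: take $K_m$ transitively oriented, attach $m-d+1$ type A\&B vertices with neighbourhoods $\{p_1,\ldots,p_d\},\{p_2,\ldots,p_{d+1}\},\ldots,\{p_{m-d+1},\ldots,p_m\}$ — these are genuine non-wrapping sub-paths so no type C vertex is involved, Theorem~\ref{relative-order}'s type-C hypotheses are vacuous, and Theorem~\ref{main-orientation} certifies semi-transitivity, hence word-representability by Theorem~\ref{key-thm}.

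The main obstacle I anticipate is the bookkeeping in the upper-bound count: one must handle the interplay between type A\&B vertices (non-wrapping intervals) and type C vertices (wrapping intervals) carefully, because the exclusion in Theorem~\ref{relative-order} is phrased asymmetrically for the two cases, and a careless rotation of $\vec P$ is not legitimate once type C vertices are present (type C is tied to the actual source $p_1$ and sink $p_m$). I would deal with this by a short case split: if some vertex has type A\&B, rotate so that vertex is "at the start" and run the interval-counting argument; if every vertex in $E_{n-m}$ of degree $d$ has type C, then the second sentence of Theorem~\ref{relative-order} (no two type C vertices may have $I$ or $O$ containing both critical vertices of one of them) gives an even tighter bound, comfortably below $m-d+1$. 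Everything else is a routine verification that the listed neighbourhoods in the construction are distinct and that no forbidden configuration arises.
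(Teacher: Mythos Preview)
Your achievability construction is exactly the paper's: take the $m-d+1$ non-wrapping windows as type A\&B neighbourhoods, note that Theorem~\ref{relative-order} is vacuous with no type~C vertices, and apply Theorem~\ref{main-orientation}. That part is fine.

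The upper-bound sketch, however, has a genuine gap. Your Case~1 plan (``fix a type A\&B vertex with neighbourhood $\{p_1,\ldots,p_d\}$, observe its critical-type role forces every other interval to avoid straddling a fixed pair, deduce the admissible intervals are exactly the $m-d+1$ non-wrapping windows'') does not work as stated: Theorem~\ref{relative-order} is a constraint \emph{generated by} type~C vertices, not by A\&B vertices, so a fixed A\&B vertex has no ``critical-type role'' to play. Moreover your conclusion that all other admissible intervals must be non-wrapping is simply false --- one can realise the bound with one A\&B vertex $\{p_1,\ldots,p_d\}$ together with $m-d$ type~C vertices (those with $|I_x|\in\{2d-m,\ldots,d-1\}$), and these are pairwise compatible. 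You also cannot ``rotate $\vec{P}$'' once type~C vertices are present, as you yourself flag. Separately, in your Case~2 the all-type-C bound is not ``comfortably below $m-d+1$'': the pairwise constraint from Theorem~\ref{relative-order} reduces to $|i_x-i_y|\le m-d$, which allows exactly $m-d+1$ distinct values of $|I_x|$, no fewer. (There is also a small arithmetic slip: the block of non-neighbours of a type~C vertex has size $m-d$, not $m-d-1$.)

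The paper avoids the case split entirely via a chord picture: place $p_1,\ldots,p_m$ on a circle and encode each degree-$d$ neighbourhood by the chord joining the two endpoints of its length-$d$ cyclic interval. Using only that $d>\frac{m+1}{2}$ forces at least one of any two such chords to correspond to a type~C vertex, the paper derives two symmetric observations directly from Theorem~\ref{relative-order}: (i)~no two chords share an endpoint, and (ii)~any two chords must cross. From these, fixing any one chord $ab$, every other chord must place exactly one endpoint among the $m-d$ vertices on the short arc outside $ab$, and by~(i) each such vertex is used at most once --- yielding at most $m-d$ further chords and hence $m-d+1$ in total. This argument treats A\&B and C vertices uniformly and needs no rotation or case analysis; it is the missing idea in your sketch.
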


\begin{proof} By Theorem~\ref{key-thm}, $S_n$ admits a semi-transitive orientation. Note that $m-d+1$ is the number of distinct non-cyclic consecutive intervals of vertices on the path $\vec{P}$. Any number of these intervals can be the neighbourhoods of type A\&B vertices by Theorem~\ref{main-orientation}, so there exists the split graph $S_n$ with the maximum number of type A\&B vertices showing that the bound is achievable.  

\begin{figure}
\begin{center}
 \begin{tikzpicture}[scale=1, transform shape]
 \tikzstyle mycircle=[draw,shape=circle,minimum 
width=2.5cm,minimum height=1.5cm] 
  \tikzstyle{vertex}=[circle,fill=black,minimum size=4pt,inner sep=-1pt]
 \node[mycircle] (A) {$K_m$};
\node[mycircle] (B) [right = of A] {$K_m$};
\node[mycircle] (C) [right = of B] {$K_m$};
\path (A.0) node[vertex] (A1){} node[right]{$c$}
        (A.180) node[vertex](A2){} node[left]{$a$}
        (A.270)node[vertex](A3){}node[below]{$b$};
\draw (A1)--(A3)--(A2);

\path (B.70) node[vertex] (B1){} node[right,yshift=1mm]{$d$}
        (B.160) node[vertex](B2){} node[left]{$c$}
        (B.180)node[vertex](B3){}node[left]{$a$}
        (B.270) node[vertex](B4){} node[below]{$b$};
\draw (B1)--(B2);
\draw (B3)--(B4);

\path (C.220) node[vertex] (C1){} node[left]{$a$}
        (C.320) node[vertex](C2){} node[right]{$b$}
        (C.245)node[vertex](C3){}
        (C.270)node[vertex] {} 
         (C.295)node[vertex](C4){} ;
\draw (C1)--(C2);

 \end{tikzpicture}

\caption{\label{intervals-fig} A schematic representation of consecutive cyclic intervals of vertices on $\vec{P}$ using chords to support the proof of Theorem~\ref{degree-thm2}. }
\end{center}
\end{figure}
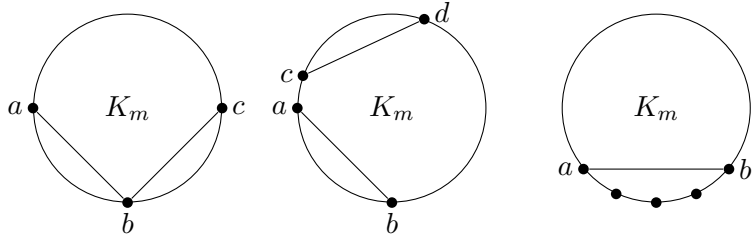

Next we prove that the bound can never be exceeded.  To do this, we use the schematic way to represent consecutive cyclic intervals of vertices on $\vec{P}$ given in Figure~\ref{intervals-fig}. In that figure, the vertices in $K_m$ are placed on a circle in clockwise direction in the order they appear in the directed path $\vec{P}$, and the chord $ab$ represents the (cyclic) interval of vertices of length $d$ that starts at $a$ and ends at $b$. If such an interval corresponds to the neighbourhood of a vertex $v$ in $E_{n-m}$, then $v$ is of type A\&B if $a$ is before $b$ in $\vec{P}$, and $v$ is of type C if $b$ is before $a$ in $\vec{P}$.

Our first observation is that no matter what the semi-transitive orientation of $S$ is, no two chords corresponding to the neighbourhoods of vertices in $E_{n-m}$ can share an endpoint. Indeed, suppose $ab$ and $bc$ are chords as in the leftmost picture in Figure~\ref{intervals-fig}. But then, because $d>\frac{m+1}{2}$, at least one of the cords $ab$ and $bc$ corresponds to the neighbourhood of a vertex in $E_{n-m}$ of type C. Suppose $ab$ corresponds to a vertex of type C (the second case is analogous). But then, the interval given by $bc$ covers both of $a$ and $b$, which contradicts to Theorem~\ref{relative-order}.

Our second observation is that no matter what the semi-transitive orientation of $S$ is, any two chords corresponding to the neighbourhoods of vertices in $E_{n-m}$ must intersect each other, that is, the situation presented in the second picture in Figure~\ref{intervals-fig} is not possible. Indeed, if $ab$ and $dc$ do not intersect each other, then at least one of them corresponds to a vertex in $E_{n-m}$ of type C because $d>\frac{m+1}{2}$. But then, we obtain exactly the same contradiction with Theorem~\ref{relative-order} as in the first observation.

Finally, suppose $ab$ represents the neighbourhood of a vertex in $E_{n-m}$ as in the rightmost picture in Figure~\ref{relative-order}. The chords representing any other neighbourhoods must have (exactly one) of their endpoints  among the indicated $m-d$ vertices in that picture by the second observation. However, by the first observation, each of the $m-d$ vertices can be connected to at most one chord, which results in the maximum possible total amount of chords, and thus vertices in $E_{n-m}$ of degree $d>\frac{m+1}{2}$, be $m-d+1$, as desired.
\end{proof}

\section{Characterizing word-representable split graphs with cliques of size 5}\label{sec-cliques-5-6}

Applying Theorems~\ref{degree-thm1} and~\ref{degree-thm2} we see that in a word-representable graph $S_n=(E_{n-5},K_5)$ we can have at most two vertices of degree 4, at most  five vertices of degree 3, and at most five vertices of degree 2 (recall that vertices of degree 1 never affect word-representability). 
 
\begin{figure}
\begin{center}

\begin{tabular}{ccc}
\begin{tikzpicture}[scale=0.8, transform shape]
  \tikzstyle{vertex}=[circle,fill=black,minimum size=4pt,inner sep=-1pt]
  \foreach \vx in {1,...,9}
    \node[vertex,xshift=6cm,yshift=.5cm] (v-\vx) at (\vx*-40:2cm) {};
  \foreach \x in {1,...,5} {
      \foreach \y in {\x,...,5} {
        \draw (v-\x) -- (v-\y);}
      };
 \foreach \from/\to in {1/6,1/9,2/6,2/7,3/7,3/8,4/8,4/9}
    { \draw (v-\from) -- (v-\to);}
\node[left = of v-5, xshift=0.8cm] {$T_5=A_4=$};
 \end{tikzpicture}

&
\begin{tikzpicture}[scale=0.8, transform shape]
  \tikzstyle{vertex}=[circle,fill=black,minimum size=4pt,inner sep=-1pt]
  \foreach \vx in {1,...,9}
    \node[vertex,xshift=6cm,yshift=.5cm] (v-\vx) at (\vx*-40:2cm) {};
  \foreach \x in {1,...,5} {
      \foreach \y in {\x,...,5} {
        \draw (v-\x) -- (v-\y);}
      };
 \foreach \from/\to in {1/6,1/7,1/8,2/6,2/7,2/9,3/6,4/8,5/9}
    { \draw (v-\from) -- (v-\to);}
\node[left = of v-5, xshift=0.6cm] {$T_6=$};
 \end{tikzpicture}
 \\
\begin{tikzpicture}[scale=0.8, transform shape]
  \tikzstyle{vertex}=[circle,fill=black,minimum size=4pt,inner sep=-1pt]
  \foreach \vx in {1,...,9}
    \node[vertex,xshift=0cm,yshift=0cm] (v-\vx) at (\vx*-40:2cm) {};
  \foreach \x in {1,...,5} {
      \foreach \y in {\x,...,5} {
        \draw (v-\x) -- (v-\y);}
      };
 \foreach \from/\to in {1/2, 1/3, 1/4, 1/5, 1/6, 1/8, 2/3, 
 2/4, 2/5, 2/7, 2/8, 3/4, 3/5, 3/6, 
 3/9, 4/5, 4/7, 4/9, 5/8, 5/9}
    { \draw (v-\from) -- (v-\to);}
\node[left] at (-2cm,0)  {$T_7=$};
 \end{tikzpicture}
  &
\begin{tikzpicture}[scale=0.8, transform shape]  
\tikzstyle{vertex}=[circle,fill=black,minimum size=4pt,inner sep=-1pt]
  \foreach \vx in {1,...,9}
    \node[vertex,xshift=0cm,yshift=0cm] (v-\vx) at (\vx*-40:2cm) {};
  \foreach \x in {1,...,5} {
      \foreach \y in {\x,...,5} {
        \draw (v-\x) -- (v-\y);}
      };
 \foreach \from/\to in {1/2, 1/3, 1/4, 1/5, 1/6, 1/8, 1/9, 
 2/3, 2/4, 2/5, 2/7, 2/8, 2/9, 3/4, 
 3/5, 3/6, 3/9, 4/5, 4/7, 4/9, 5/8}
    { \draw (v-\from) -- (v-\to);}
\node[left] at (-2cm,0)   {$T_8=$};
 \end{tikzpicture}

\end{tabular}

\begin{tikzpicture}[scale=0.8, transform shape]
  \tikzstyle{vertex}=[circle,fill=black,minimum size=4pt,inner sep=-1pt]
  \foreach \vx in {1,...,10}
    \node[vertex,xshift=0cm,yshift=0cm] (v-\vx) at (\vx*-36:2cm) {};
  \foreach \x in {1,...,5} {
      \foreach \y in {\x,...,5} {
        \draw (v-\x) -- (v-\y);}
      };
 \foreach \from/\to in{1/2, 1/3, 1/4, 1/5, 1/6, 1/7, 1/9, 
 1/10, 2/3, 2/4, 2/5, 2/8, 2/9, 3/4, 
 3/5, 3/6, 3/8, 3/10, 3/9, 4/5, 4/7, 
 4/10, 5/9, 5/10}
    { \draw (v-\from) -- (v-\to);}
    \node[left] at (-2.2cm,0)  {$T_9=$};
 \end{tikzpicture}

\caption{\label{nonRepT5-T9} The minimal non-word-representable split graphs $T_5$--$T_9$}
\end{center}
\end{figure}
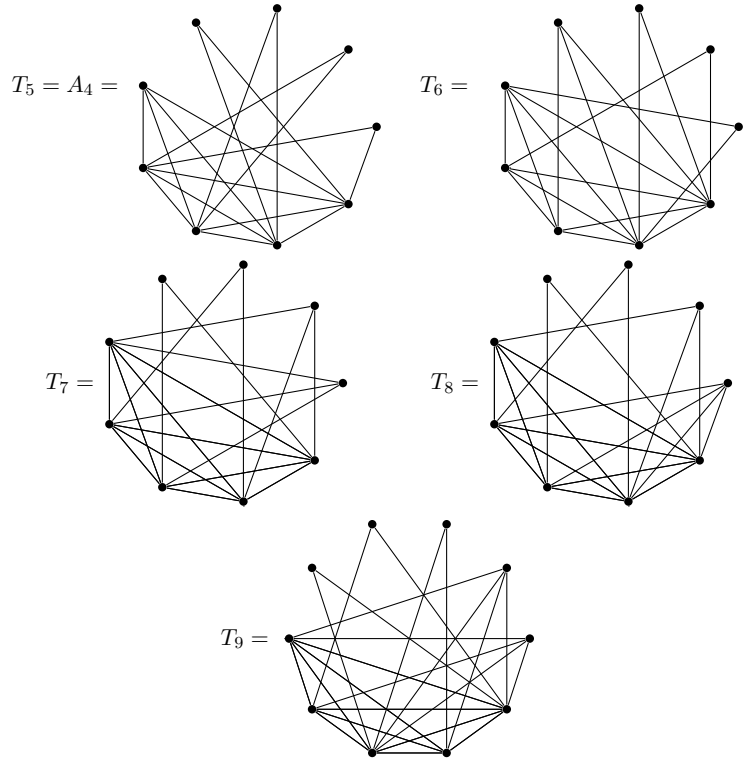

Clearly, the minimal non-word-representable graphs in Figure~\ref{nonRepTri} must be avoided when considering   $K_5$. Computational experiments for $S_n=(E_{n-5},K_5)$, which were possible due to the assumptions discussed above, reveal 5 more minimal non-word-representable graphs presented in Figure~\ref{nonRepT5-T9}. One of these graphs is $A_4$ (see Definition~\ref{def-Aell}) whose minimality and non-word-representability is given by Theorem~\ref{Aell-min-non-repres}. We conclude the section with proving that the graphs $T_6$--$T_9$ in Figure~\ref{nonRepT5-T9} are minimal non-word-representable graph.

\begin{theorem} The graph $T_6$ in Figure~\ref{nonRepT5-T9} is a minimal non-word-representable graph.\end{theorem}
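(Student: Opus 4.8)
The plan is to prove both halves of the claim: that $T_6$ is not word-representable, and that every proper induced subgraph of $T_6$ is word-representable. For the non-word-representability, I would argue by contradiction using the structural machinery from Section~\ref{split-gr-summary}. First I would identify the split structure of $T_6$: vertices $1,\ldots,5$ form the clique $K_5$ and vertices $6,7,8,9$ form the independent set $E_4$, with neighbourhoods read off from the \texttt{\textbackslash foreach} list, namely $N(6)=\{1,2,3\}$, $N(7)=\{1,2\}$, $N(8)=\{1,4\}$, $N(9)=\{2,5\}$. Suppose $T_6$ admits a semi-transitive orientation. By Lemma~\ref{lem-tran-orie} and Theorem~\ref{main-orientation}, $K_5$ is oriented transitively along a directed path $\vec P = p_1\to\cdots\to p_5$, which is a permutation of $1,\ldots,5$, and each of $6,7,8,9$ is of type A, B, or C with its neighbourhood forming a cyclic interval on $\vec P$ (of length $3$, $2$, $2$, $2$ respectively), subject to the constraints of Theorem~\ref{relative-order}.

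The heart of the argument is then a finite case analysis over the $5! = 120$ possible orderings of $\vec P$ — cut down drastically by symmetry and by the interval condition. The key pressure point is that $\{1,2,3\}$, $\{1,2\}$, $\{1,4\}$, $\{2,5\}$ must all simultaneously be cyclic intervals of $\vec P$. The condition on $\{1,2\}$ forces $1$ and $2$ adjacent on the cycle; the condition on $\{1,4\}$ forces $1,4$ adjacent; the condition on $\{2,5\}$ forces $2,5$ adjacent; so up to reflection the cyclic order of $\vec P$ must be $4,1,2,5$ with $3$ inserted somewhere — but then $\{1,2,3\}$ being a cyclic interval of length $3$ forces $3$ adjacent to $1$ or to $2$ on the cycle, i.e. $3$ must sit between $4$ and $1$ or between $2$ and $5$, giving essentially the cyclic order $(3,4,1,2,5)$ or $(4,1,2,5,3)$, which are reflections/rotations of each other. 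So $\vec P$ is, up to the residual freedom of choosing which of the $5$ cyclic rotations is the actual linear path, essentially unique: the cycle is $3$–$4$–$1$–$2$–$5$–$3$. Now I would run through the (at most $5$, but fewer after using reflection) linearizations, assign each of $6,7,8,9$ its type (A\&B if its interval reads "forward" along $\vec P$, type C if it reads "backward", per the discussion in the proof of Theorem~\ref{degree-thm2}), and check Theorem~\ref{relative-order}. I expect that in every case two of the four independent-set vertices are forced to be of type C and their sets $I_x, O_x$ violate the "no $y$ of type C with $I_y$ or $O_y$ containing both $p_{|I_x|}$ and $p_{m-|O_x|+1}$" clause, or a type-C vertex's forbidden pair is covered by an A\&B vertex. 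This exhaustive-but-short check is the main obstacle — it is routine in spirit but must be organized carefully so no linearization is missed.

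For minimality, I would use heredity plus the reductions already available. Removing any single vertex of $T_6$ yields either a split graph with clique $K_5$ and at most three independent-set vertices, or (if a clique vertex is removed) a split graph with clique $K_4$. In the former case I can exhibit a valid $\vec P$ and type-assignment directly via Theorem~\ref{main-orientation} — with only three intervals to place, the obstruction in Theorem~\ref{relative-order} is easy to avoid, or one can even invoke Theorem~\ref{thm-m+1}/Corollary~\ref{K-ell-k} after noting the reduced degrees. In the latter case, removing a vertex of the $K_5$ turns it into a $K_4$-clique split graph, and I would check against Theorem~\ref{main-2} that none of $T_1,T_2,T_3,T_4$ appears as an induced subgraph — each of $T_1$–$T_4$ has at least $7$ vertices, while $T_6$ minus a clique vertex has $8$ vertices but a very sparse independent set (degrees at most $3$, several of size $2$), so a direct subgraph check is short; alternatively, since the independent-set vertices of $T_6$ each have degree $\le 3$, after deleting a $K_5$-vertex every surviving independent-set vertex has degree $\le 3 \le m$ in the new $K_4$, and Theorem~\ref{thm-m+1} applies after merging duplicate neighbourhoods via Lemma~\ref{lemma-assumptions}. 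Either way the nine vertex-deleted subgraphs are handled uniformly, completing minimality.

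Summarizing the order of operations: (i) read off the split decomposition and neighbourhoods of $T_6$; (ii) assume a semi-transitive orientation and use Theorem~\ref{main-orientation} to pin down $\vec P$ almost uniquely from the interval constraints; (iii) for each surviving linearization of $\vec P$, assign types and derive a contradiction from Theorem~\ref{relative-order}; (iv) for minimality, delete each vertex in turn and verify word-representability either by exhibiting an explicit semi-transitive orientation (Theorem~\ref{main-orientation}) or by reducing via Lemma~\ref{lemma-assumptions} to a case covered by Theorem~\ref{thm-m+1} or Theorem~\ref{main-2}. The main difficulty is step (iii): keeping the case analysis genuinely exhaustive while short enough to present cleanly.
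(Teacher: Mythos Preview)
Your overall structure matches the paper's: use Theorem~\ref{main-orientation} to force the cyclic arrangement of $K_5$ from the degree-$2$ neighbourhoods, derive a contradiction for non-representability, and then handle minimality by vertex deletions using Lemma~\ref{lemma-assumptions} and explicit re-orderings of $\vec P$ (with Theorem~\ref{main-2} for the $K_4$ cases).

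However, your step (iii) is a detour based on a slip. Once the pairs $\{1,2\}$, $\{1,4\}$, $\{2,5\}$ are forced to be cyclically adjacent, the cyclic order of $\vec P$ is uniquely $4\text{--}1\text{--}2\text{--}5\text{--}3$ (there is only one slot left for $3$, namely between $5$ and $4$). In this cycle the consecutive triples are $\{4,1,2\},\{1,2,5\},\{2,5,3\},\{5,3,4\},\{3,4,1\}$, and $\{1,2,3\}$ is \emph{not} among them. So vertex $6$ already fails the cyclic-interval condition of Theorem~\ref{main-orientation}, and that is the contradiction---exactly as the paper argues. Your candidate orders $(3,4,1,2,5)$ and $(4,1,2,5,3)$ are in fact the same cyclic order with $3$ between $5$ and $4$; neither makes $\{1,2,3\}$ consecutive, and the alternatives that would (inserting $3$ between $4$ and $1$, or between $2$ and $5$) destroy the adjacency needed for $\{1,4\}$ or $\{2,5\}$. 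There is thus no surviving arrangement on which to run a Theorem~\ref{relative-order} check; the plan to analyse linearisations and type-$C$ conflicts is unnecessary for $T_6$.

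One further correction on the minimality side: your alternative invocation of Theorem~\ref{thm-m+1} is a misreading. That theorem constrains the degrees of the \emph{clique} vertices (each must have at most one neighbour in the independent set), not the degrees of the independent-set vertices. In $T_6$, clique vertices $1$ and $2$ each meet three independent-set vertices, so even after deleting one clique vertex the hypothesis of Theorem~\ref{thm-m+1} fails in general. Stick with your primary plan (explicit $\vec P$ after Lemma~\ref{lemma-assumptions} reductions, or Theorem~\ref{main-2} for the $K_4$ cases), which is what the paper does.
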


\begin{figure}
\begin{center}
\begin{tikzpicture}[scale=1, transform shape]
 \tikzstyle{vertex}=[circle,fill=black,minimum size=3pt,inner sep=-1pt]
\path node[draw,shape=circle,minimum width=2cm,minimum height=2cm] (A) {$K_5$};
\foreach \vx/\lab in {45/j,135/i,210/x,270/y,330/z}
  \node[vertex] (v-\lab) at (A.\vx) {};
\foreach \vx in {1,...,4}
  \node[vertex]  (v-\vx) at (\vx*90:1.5cm) {} ;
 \foreach \from/\to in {2/i,2/x,1/i,1/j,4/j,4/z,3/i,3/j,3/y}
    { \draw (v-\from) -- (v-\to);}
\path (v-1) node[left] {{\small $b$}}
    (v-2) node[left] {{\small $a$}}
    (v-3) node[left] {{\small $d$}}
    (v-4) node[right] {{\small $c$}}
    (v-i) node[left] {{\small $i$}}
    (v-j) node[right] {{\small $j$}}
    (v-x) node[left] {{\small $x$}}
    (v-y) node[above] {{\small $y$}}
    (v-z) node[right] {{\small $z$}};
 \end{tikzpicture}
\caption{\label{T6-fig} Proving non-word-representability of $T_6$}
\end{center}
\end{figure}
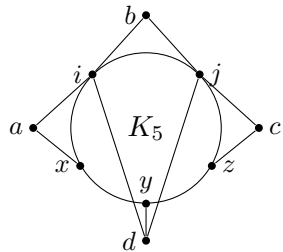

\begin{proof} We begin with proving non-word-representability of $T_6$. Suppose $T_6$ is word-representable, and thus, by Theorem~\ref{key-thm}, it can be oriented semi-transitively.  Pick any such semi-transitive orientation of $T_6$. Then, by Theorem~\ref{main-orientation}, the neighbourhood of a vertex of degree 2 must  be two vertices staying next to each other, possibly cyclicly (if they are the source and the sink), on the path $\vec{P}$, as shown in Figure~\ref{T6-fig} (where the five vertices of $\vec{P}$ are placed on a circle; note that in our argument it is not important where the source and sink are). But then, the neighbourhood of the vertex $d$ of degree 3 is forced to be non-consecutive vertices on $\vec{P}$. Contradiction with Theorem~\ref{main-orientation}.

For proving the minimality of $T_6$, we consider removing each of the vertices in $T_6$ (one at a time) and, if necessary, describe a permutation of vertices of $K_5$, which results in all neighbourhoods of vertices in $E_{n-5}$ be consecutive intervals on $\vec{P}$, or on whatever remains from $\vec{P}$ (which is still transitively oriented); then, Theorem~\ref{main-orientation} can be used to obtain a semi-transitive orientation of the resulting graph proving its word-representability by Theorem~\ref{key-thm}. 

\begin{itemize}
\item The vertices $a$ and $c$ are clearly symmetric, so we can consider removing $a$ and skip considering removing $c$. In the case of $a$ removed, swap $x$ and $y$  to obtain the desired result. 
\item If $d$ is removed, all intervals become consecutive.
\item If $b$ is removed, place $y$ between $i$ and $j$  to obtain the result.
\item The vertices $x$ and $z$ are clearly symmetric, so we can consider removing $x$ and skip considering removing $y$. The vertex $a$ becomes of degree 1 and can be also removed by Lemma~\ref{lemma-assumptions}. All intervals become consecutive.
\item If the vertex $y$ is removed, then the vertices $d$ and $b$ have the same neighbourhoods, and one of them can be removed by Lemma~\ref{lemma-assumptions}. All intervals become consecutive.
\item The vertices $i$ and $j$ are clearly symmetric, so we can consider removing $i$ and skip considering removing $j$. If $i$ is removed, $a$ and $b$ become of degree 1 and can be removed by Lemma~\ref{lemma-assumptions}. Swap $x$ and $y$.
\end{itemize}
Our proof is completed.
\end{proof}

\begin{theorem} The graph $T_7$ in Figure~\ref{nonRepT5-T9} is a minimal non-word-representable graph.\end{theorem}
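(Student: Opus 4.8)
The plan is to follow the same two-part template used for $T_6$: first establish that $T_7$ is not word-representable by deriving a contradiction from the structure theorems, and then verify minimality by deleting each vertex in turn and exhibiting a semi-transitive orientation of what remains via Theorem~\ref{main-orientation}.

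For the non-representability part, I would first read off from Figure~\ref{nonRepT5-T9} the degree sequence of the four vertices of $E_{n-5}$ outside the $K_5$ (vertices $6,7,8,9$ in the picture), which by the degree restrictions from Theorems~\ref{degree-thm1} and~\ref{degree-thm2} applied to $m=5$ must each have degree $2$, $3$, or $4$; in fact $T_7$ appears to have one vertex of degree $4$ and several of degree $2$ or $3$. Suppose for contradiction that $T_7$ admits a semi-transitive orientation. By Lemma~\ref{lem-tran-orie} the clique $K_5$ is oriented transitively with a longest path $\vec P = p_1\to\cdots\to p_5$, and by Theorem~\ref{main-orientation} every outside vertex's neighbourhood is a consecutive (cyclic) interval on $\vec P$, with the additional non-overlap restrictions of Theorem~\ref{relative-order}. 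The argument should parallel the $T_6$ case: the degree-$2$ vertices pin down which pairs of clique vertices are cyclically adjacent on $\vec P$, and then --- possibly after also using a degree-$3$ vertex or the type-C restriction of Theorem~\ref{relative-order} --- one shows that the neighbourhood of the degree-$4$ vertex (an interval of four of the five clique vertices, i.e.\ the complement of a single vertex) cannot be made consecutive simultaneously with all the degree-$2$ and degree-$3$ constraints. Concretely, I expect that the three or four degree-$2$ and degree-$3$ neighbourhoods force a cyclic order on $p_1,\dots,p_5$ that is essentially unique up to reflection, and that in this forced order the four clique-neighbours of the degree-$4$ vertex do not form a consecutive block, contradicting Theorem~\ref{main-orientation}. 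If the degree-$2$ constraints alone do not fully pin down the cyclic order, I would bring in Theorem~\ref{relative-order}: since $d=3 > \frac{5+1}{2}$ is false but $d=4 > 3$ holds, the degree-$4$ interval's two ``boundary'' clique vertices cannot both lie in any other outside vertex's neighbourhood, which typically kills the remaining orderings.

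For minimality, I would delete each of the nine vertices of $T_7$ one at a time and show the result is word-representable. Exploiting symmetries of the graph (which I would first identify from the edge list in Figure~\ref{nonRepT5-T9}) cuts the casework roughly in half, exactly as in the $T_6$ proof. For each surviving graph I would: (i) note that any outside vertex whose degree drops to $1$ (or whose neighbourhood duplicates another's) can be deleted without affecting word-representability by Lemma~\ref{lemma-assumptions}; (ii) then specify a permutation of the clique vertices --- i.e.\ a choice of transitive orientation of $K_5$ or of the remaining $K_4$ --- making all the (at most three or four) remaining outside neighbourhoods consecutive intervals on the path, and if needed choosing each such vertex to be of type A\&B; (iii) invoke Theorem~\ref{main-orientation} and then Theorem~\ref{key-thm}. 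When the deleted vertex is one of the clique vertices, the clique drops to $K_4$ and the intervals to be realized have length at most $3$; since $3 \le \frac{4+1}{2}$ fails but these are few, one again arranges them by hand, and Theorem~\ref{main-2} could even be cited directly once the resulting graph is checked against $T_1,\dots,T_4$.

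The main obstacle is the bookkeeping in the non-representability argument: unlike $T_6$, where a single degree-$3$ vertex was immediately forced to be non-consecutive, $T_7$ has more outside vertices and a richer edge pattern, so I expect to need a short case analysis on the cyclic position of the ``missing'' clique vertex of the degree-$4$ interval, ruling out each position using a different one of the degree-$2$/degree-$3$ neighbourhoods together with Theorem~\ref{relative-order}. Getting the vertex labels in Figure~\ref{nonRepT5-T9} matched correctly to the abstract structure (which of $6,7,8,9$ has degree $4$, which clique vertex each misses) is where care is required; once that dictionary is fixed, the contradiction should drop out exactly as in the $T_6$ proof, and the minimality checks are routine applications of Lemma~\ref{lemma-assumptions} and Theorem~\ref{main-orientation}.
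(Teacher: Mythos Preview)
Your template is right, but your reading of $T_7$ is not, and the non-representability argument you outline targets a structure the graph does not have. From the edge list in Figure~\ref{nonRepT5-T9}, the four independent-set vertices of $T_7$ have neighbourhoods $\{1,3\}$, $\{2,4\}$, $\{1,2,5\}$, $\{3,4,5\}$: two vertices of degree~$2$ with \emph{disjoint} neighbourhoods, and two vertices of degree~$3$, each meeting one element of each degree-$2$ pair together with the remaining clique vertex~$5$. There is no degree-$4$ vertex at all, so the ``missing clique vertex of the degree-$4$ interval'' case analysis you plan cannot be carried out, and Theorem~\ref{relative-order} (which you intended to apply to a degree-$4$ vertex) plays no role here.

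The paper's actual argument is purely about consecutiveness and does not invoke Theorem~\ref{relative-order}. The two disjoint degree-$2$ neighbourhoods force the cyclic order on $\vec P$ to consist of the block $\{1,3\}$, the block $\{2,4\}$, and the singleton~$5$ in some arrangement. One then checks (a short finite check, or the symmetry observation the paper makes) that in \emph{no} such cyclic order can both $\{1,2,5\}$ and $\{3,4,5\}$ be simultaneously consecutive: whichever side of~$5$ you place the $\{1,3\}$-block versus the $\{2,4\}$-block, exactly one of the two degree-$3$ neighbourhoods becomes consecutive and the other does not. That is the contradiction with Theorem~\ref{main-orientation}. Your plan for minimality, by contrast, is essentially what the paper does (exploit the $a\leftrightarrow c$, $i\leftrightarrow j$, $x\leftrightarrow z$ symmetries, drop degree-$1$ leftovers via Lemma~\ref{lemma-assumptions}, then either exhibit a good permutation of the clique or fall back on Theorem~\ref{main-2} once the clique is down to $K_4$), so that part only needs the correct degree data to go through.
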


\begin{figure}
\begin{center}
\begin{tikzpicture}[scale=1, transform shape]
 \tikzstyle{vertex}=[circle,fill=black,minimum size=3pt,inner sep=-1pt]
\path node[draw,shape=circle,minimum width=2cm,minimum height=2cm] (A) {$K_5$};
\foreach \vx/\lab in {45/j,135/i,210/x,270/y,330/z}
  \node[vertex] (v-\lab) at (A.\vx) {};
\foreach \vx in {1,...,4}
  \node[vertex]  (v-\vx) at (\vx*90:1.5cm) {} ;
 \foreach \from/\to in {2/i,2/x,1/i,1/j,1/y,4/j,4/z,3/x,3/z,3/y}
    { \draw (v-\from) -- (v-\to);}
\path (v-1) node[left] {{\small $b$}}
    (v-2) node[left] {{\small $a$}}
    (v-3) node[left] {{\small $d$}}
    (v-4) node[right] {{\small $c$}}
    (v-i) node[left] {{\small $i$}}
    (v-j) node[right] {{\small $j$}}
    (v-x) node[left] {{\small $x$}}
    (v-y) node[above,xshift=-1.5mm] {{\small $y$}}
    (v-z) node[right] {{\small $z$}};
 \end{tikzpicture}
\caption{\label{T7-fig} Proving non-word-representability of $T_7$}
\end{center}
\end{figure}
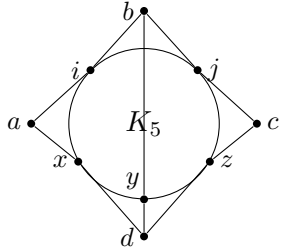

\begin{proof} We begin with proving non-word-representability of $T_7$. Suppose $T_7$ is word-representable, and thus, by Theorem~\ref{key-thm}, it can be oriented semi-transitively.  Pick any such semi-transitive orientation of $T_7$. Then, by Theorem~\ref{main-orientation}, the neighbourhoods of vertices of degree 2 must  be consecutive, so since they are also disjoint, without loss of generality the degree 2 vertices are positioned as in Figure~\ref{T7-fig} (where the five vertices of $\vec{P}$ are placed on a circle; note that in our argument it is not important where the source and sink are). But then, since the vertices of degree 3 have symmetric properties (their neighbourhoods contain one vertex from each of vertices $a$ and $c$ neighbourhoods and vertex $y$) we see that there is no way for both neighbourhoods of $b$ and $d$ to be consecutive on $\vec{P}$. Contradiction with Theorem~\ref{main-orientation}.

For proving the minimality of $T_7$, we consider removing each of the vertices in $T_7$ (one at a time) and, if necessary, describe a permutation of vertices of $K_5$, which results in all neighbourhoods of vertices in $E_{n-5}$ be consecutive intervals on $\vec{P}$, or on whatever remains from $\vec{P}$ (which is still transitively oriented); then, Theorem~\ref{main-orientation} can be used to obtain a semi-transitive orientation of the resulting graph proving its word-representability by Theorem~\ref{key-thm}. 

\begin{itemize}
\item The vertices $a$ and $c$ are clearly symmetric, so we can consider removing $a$ and skip considering removing $c$. In the case of $a$ removed, swap $x$ and $y$ to obtain the desired result. 
\item If $b$ is removed, all intervals become consecutive. If $d$ is removed, then place $y$ between $i$ and $j$  to obtain the desired result. 
\item The vertices $i$ and $j$ are clearly symmetric, so we can consider removing $i$ and skip considering removing $j$. The vertex $a$ becomes of degree 1 and can be also removed by Lemma~\ref{lemma-assumptions}. 
The obtained graph is word-representable by Theorem~\ref{main-2}.
\item The vertices $x$ and $z$ are clearly symmetric, so we can consider removing $x$ and skip considering removing $z$. The vertex $a$ becomes of degree 1 and can be also removed by Lemma~\ref{lemma-assumptions}. 
The obtained graph is word-representable by Theorem~\ref{main-2}.
\item If $y$ is removed, then the graph is isomorphic to $K^{\triangle}_4$ and it is word-representable by Theorem~\ref{thm-K-Tri-m-w-r}. 
\end{itemize}
Our proof is completed.
 \end{proof}

\begin{theorem} The graph $T_8$ in Figure~\ref{nonRepT5-T9} is a minimal non-word-representable graph.\end{theorem}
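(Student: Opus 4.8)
The plan is to follow the pattern of the proofs for $T_6$ and $T_7$: first show that $T_8$ is not word-representable by using the structural description of semi-transitive orientations of split graphs, and then prove minimality by producing a semi-transitive orientation of $T_8-v$ for every vertex $v$. Reading off Figure~\ref{nonRepT5-T9}, write $T_8=(E_4,K_5)$ with clique on $\{1,2,3,4,5\}$ and four independent vertices $a,b,c,d$ with $N(a)=\{1,3\}$, $N(b)=\{2,4\}$, $N(c)=\{1,2,5\}$, $N(d)=\{1,2,3,4\}$, and note the automorphism $\sigma$ of $T_8$ interchanging $1\leftrightarrow 2$, $3\leftrightarrow 4$ and $a\leftrightarrow b$ while fixing $5,c,d$; every vertex of $T_8$ is $\sigma$-equivalent to one of $1,3,5,a,c,d$.

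For non-word-representability, suppose $T_8$ admits a semi-transitive orientation. By Theorem~\ref{main-orientation} the clique lies on a directed path $\vec P=p_1\to\cdots\to p_5$; placing $p_1,\dots,p_5$ on a circle in this order, the neighbourhood of each independent vertex is a circular arc --- a genuine interval $\{p_i,\dots,p_j\}$ if the vertex is of type A\&B, and an interval wrapping over the ``gap'' edge $p_5p_1$ if it is of type C (Theorems~\ref{semi-tran-groups},~\ref{relative-order}). Since $a,b$ have degree $2$ and $c$ has degree $3$, the arc conditions force $1,3$ consecutive, $2,4$ consecutive and (because then the complement $\{3,4\}$ of $N(c)$ must be an arc) $3,4$ consecutive on the circle; these three requirements pin down the circular order of $1,\dots,5$ uniquely up to rotation and reflection as $1,3,4,2,5$ (from $1,3$ and $3,4$ consecutive, $3$ is circle-adjacent to exactly $1$ and $4$; then $2$ must be $4$'s other circle-neighbour, forcing the path $1,3,4,2$, which $5$ closes between $2$ and $1$). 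Now the arc of $N(d)=\{1,2,3,4\}$ uses precisely the circle-edges $\{13,34,42\}$ and the arc of $N(c)=\{1,2,5\}$ uses precisely $\{25,51\}$, and these two edge sets partition the five circle-edges, so the gap edge lies in exactly one of them: exactly one of $c,d$ is of type C and the other of type A\&B.

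The argument then concludes by showing that, for whichever of $c,d$ is of type C, the pair of positions $\bigl(p_{|I_x|},\,p_{m-|O_x|+1}\bigr)$ in Theorem~\ref{relative-order} equals $\{1,2\}$. Indeed, if $d$ is of type C then the gap lies in $\{13,34,42\}$ and is not incident to $5$, so $5$ is an interior vertex of $\vec P$ whose $\vec P$-neighbours are its two circle-neighbours $1$ and $2$, and since $N(d)$ omits only $5$ the prefix $I_d$ ends and the suffix $O_d$ begins exactly at $1$ and $2$. If instead $c$ is of type C then the gap lies in $\{25,51\}$ and is incident to neither $3$ nor $4$, so the block $\{3,4\}$ omitted by $N(c)$ occupies two consecutive interior positions of $\vec P$ whose $\vec P$-neighbours are the circle-neighbours of that block, again $1$ and $2$. (Reversing $\vec P$ only swaps the two members of the pair, and the reflection of the circular order is realised by $\sigma$, so no further cases occur.) In either case the remaining vertex of $\{c,d\}$ is of type A\&B and adjacent to both $1$ and $2$ --- a contradiction with Theorem~\ref{relative-order}. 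Hence $T_8$ is not word-representable.

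For minimality, by $\sigma$ it suffices to treat $v\in\{1,3,5,a,c,d\}$. In each case I would reorder the vertices of the surviving clique so that along the resulting longest directed path $\vec P$ every independent-vertex neighbourhood is a (non-cyclic) consecutive interval; declaring all independent vertices to be of type A then gives a semi-transitive orientation by Theorem~\ref{main-orientation} (with no type-C vertex the restrictions of Theorem~\ref{relative-order} are vacuous), so $T_8-v$ is word-representable by Theorem~\ref{key-thm}. For instance $\vec P=4\to2\to5\to1\to3$ works after deleting $d$, $\vec P=1\to3\to4\to2\to5$ after deleting $c$, $\vec P=5\to1\to2\to4\to3$ after deleting $a$, and $\vec P=4\to2\to1\to3$ (on $\{1,2,3,4\}$) after deleting $5$; when $v\in\{1,3\}$ the vertex $a$ drops to degree $1$ and is first removed by Lemma~\ref{lemma-assumptions}, after which a similar reordering of the remaining $K_4$ works. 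The delicate point of the whole proof is the non-representability step --- establishing that the circular order is forced to $1,3,4,2,5$ and then checking, uniformly over the position of the gap and the orientation of $\vec P$, that the critical pair of the type-C vertex is exactly $\{1,2\}$; after that the contradiction with Theorem~\ref{relative-order} and the minimality constructions are routine.
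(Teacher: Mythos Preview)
Your argument is correct and follows essentially the same route as the paper's proof. Both proofs show non-word-representability by establishing that the degree-$2$ vertices and the degree-$3$ vertex force a unique cyclic order of the clique on the hypothetical path $\vec P$, and then that the degree-$3$ and degree-$4$ independent vertices together violate Theorem~\ref{relative-order}; your pair $\{1,2\}$ is exactly the paper's pair $\{x,z\}$ under the obvious relabelling. The only notable difference is in the minimality step: you supply an explicit linear order of the clique for every deleted vertex (making all remaining independent vertices type~A), whereas the paper handles the deletion of a clique vertex by dropping to a $K_4$-split graph and invoking Theorem~\ref{main-2}. Both approaches are straightforward; yours is a bit more self-contained, the paper's a bit shorter.
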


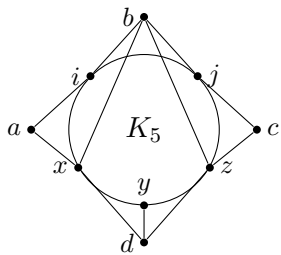
\begin{figure}
\begin{center}
\begin{tikzpicture}[scale=1, transform shape]
 \tikzstyle{vertex}=[circle,fill=black,minimum size=3pt,inner sep=-1pt]
\path node[draw,shape=circle,minimum width=2cm,minimum height=2cm] (A) {$K_5$};
\foreach \vx/\lab in {45/j,135/i,210/x,270/y,330/z}
  \node[vertex] (v-\lab) at (A.\vx) {};
\foreach \vx in {1,...,4}
  \node[vertex]  (v-\vx) at (\vx*90:1.5cm) {} ;
 \foreach \from/\to in {2/i,2/x,1/i,1/j,1/x,1/z,4/j,4/z,3/x,3/z,3/y}
    { \draw (v-\from) -- (v-\to);}
\path (v-1) node[left] {{\small $b$}}
    (v-2) node[left] {{\small $a$}}
    (v-3) node[left] {{\small $d$}}
    (v-4) node[right] {{\small $c$}}
    (v-i) node[left] {{\small $i$}}
    (v-j) node[right] {{\small $j$}}
    (v-x) node[left] {{\small $x$}}
    (v-y) node[above] {{\small $y$}}
    (v-z) node[right] {{\small $z$}};
 \end{tikzpicture}
\caption{\label{T8-fig} Proving non-word-representability of $T_8$ }
\end{center}
\end{figure}

\begin{proof} We begin with proving non-word-representability of $T_8$. Suppose $T_8$ is word-representable, and thus, by Theorem~\ref{key-thm}, it can be oriented semi-transitively.  Pick any such semi-transitive orientation of $T_8$. Then, by Theorem~\ref{main-orientation}, the neighbourhoods of all vertices in the independent set must be consecutive intervals, and the only way to arrange this is shown in Figure~\ref{T8-fig} (where the five vertices of $\vec{P}$ are placed on a circle and the orientation of the longest path is assumed to be in clockwise direction). But then we obtain a contradiction with Theorem~\ref{relative-order}. Indeed, if  $b$ is of type $C$ then $d$ must be of type A\&B, but $x$ and $z$ are in the neighbourhood of $d$. On the other hand, if $d$ is of type $C$ then $b$ must be of type A\&B, but $x$ and $z$ are in the neighbourhood of $b$. Thus, $T_8$ is not word-representable. 

For proving the minimality of $T_8$, we consider removing each of the vertices in $T_8$ (one at a time) and, if necessary, describe a permutation of vertices of $K_5$, which results in all neighbourhoods of vertices in $E_{n-5}$ be consecutive intervals on $\vec{P}$, or on whatever remains from $\vec{P}$ (which is still transitively oriented); then, Theorem~\ref{main-orientation} can be used to obtain a semi-transitive orientation of the resulting graph proving its word-representability by Theorem~\ref{key-thm}. 

\begin{itemize}
\item The vertices $a$ and $c$ are clearly symmetric, so we can consider removing $a$ and skip considering removing $c$. If $a$ is removed, swap $x$ and $y$ and note that making $x$ in the new position the source, both $b$ and $d$ become of type C, so there is no conflict with Theorem~\ref{relative-order} (the neighbourhoods in question are still consecutive).  
\item If $b$ is removed, or if $d$ is removed, then clearly there is no conflict with Theorem~\ref{relative-order}, and the neighbourhoods in question are still consecutive.
 \item The vertices $i$ and $j$ are clearly symmetric, so we can consider removing $i$ and skip considering removing $j$. The vertex $a$ becomes of degree 1 and can be also removed by Lemma~\ref{lemma-assumptions}. The obtained graph is word-representable by Theorem~\ref{main-2}.

\item The vertices $x$ and $z$ are clearly symmetric, so we can consider removing $x$ and skip considering removing $z$. The vertex $a$ becomes of degree 1 and can be also removed by Lemma~\ref{lemma-assumptions}. The obtained graph is word-representable by Theorem~\ref{main-2}.
\item If $y$ is removed, then the obtained graph is a subgraph of $K^{\triangle}_5$ and it is word-representable by Theorem~\ref{thm-K-Tri-m-w-r}. 
\end{itemize}
Our proof is completed. \end{proof}

\begin{theorem} The graph $T_9$ in Figure~\ref{nonRepT5-T9} is a minimal non-word-representable graph.\end{theorem}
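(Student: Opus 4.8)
The plan is to mirror the structure of the proofs for $T_6$, $T_7$, and $T_8$: first establish non-word-representability of $T_9$ using Theorems~\ref{main-orientation} and~\ref{relative-order}, then verify minimality by deleting each vertex and exhibiting a semi-transitive orientation of the remainder. Note that $T_9$ has clique $K_5$ (vertices $1$--$5$) and five independent-set vertices $6,7,8,9,10$; reading off the listed edges, vertex $6$ has neighbourhood $\{1,3\}$, vertex $7$ has $\{1,4\}$, vertex $8$ has $\{2,3\}$, vertex $9$ has $\{1,2,3,5\}$, and vertex $10$ has $\{1,3,4,5\}$. So there are three degree-$2$ vertices and two degree-$4$ vertices, matching the counts permitted by Theorems~\ref{degree-thm1} and~\ref{degree-thm2} (two of degree $4$, at most five of degree $2$). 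By Theorem~\ref{key-thm} it suffices to show no semi-transitive orientation exists.

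For the non-representability direction, suppose $T_9$ is oriented semi-transitively with longest directed path $\vec{P}$ on the five clique vertices, drawn cyclically. By Theorem~\ref{main-orientation} the neighbourhood of each independent-set vertex must be a consecutive cyclic interval of $\vec{P}$: the three degree-$2$ vertices $6,7,8$ force $\{1,3\},\{1,4\},\{2,3\}$ to each be a pair of cyclically adjacent vertices on $\vec{P}$, and the two degree-$4$ vertices $9,10$ force $\{1,2,3,5\}$ and $\{1,3,4,5\}$ to each be a cyclic interval of length $4$ (equivalently, the complementary single vertex is $4$ for vertex $9$ and $2$ for vertex $10$). The first step is to show these adjacency constraints pin down the cyclic order of $1,2,3,4,5$ essentially uniquely (up to reflection/rotation): from $\{1,3\}$, $\{1,4\}$, $\{2,3\}$ adjacent, vertex $1$ is cyclically between $3$ and $4$, and vertex $3$ is between $1$ and $2$, which — combined with the length-$4$ interval requirements for $9$ and $10$ — forces the cyclic order to be (say) $2,3,1,4,5$ or its reflection. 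In that order one checks the degree-$4$ neighbourhoods: $\{1,2,3,5\}$ would have to omit only $4$, which is consistent, and $\{1,3,4,5\}$ omits only $2$, also consistent; so the interval condition alone is satisfiable, and the contradiction must come from Theorem~\ref{relative-order}. The key step is then to examine the two degree-$4$ vertices $9$ and $10$ in this forced configuration: their intervals overlap heavily (they share $\{1,3,5\}$), so once the transitive orientation of $K_5$ is chosen — i.e.\ once we fix which clique vertex is the source and which is the sink along $\vec{P}$ — at least one of $9,10$ is of type C, and its set $I$ or $O$ ends up covering the two ``boundary'' vertices $p_{|I_x|}$ and $p_{m-|O_x|+1}$ associated with the other degree-$4$ vertex (or with a degree-$2$ vertex), violating Theorem~\ref{relative-order}. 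Carefully enumerating the (few, by symmetry) choices of source/sink placement and showing each produces such a violation is the heart of the argument, and it is the step I expect to be the main obstacle, since it requires tracking the type-C constraints for both large-degree vertices simultaneously rather than just checking consecutiveness.

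For minimality, I would delete each of the ten vertices in turn. Deleting a degree-$2$ vertex ($6$, $7$, or $8$) or a degree-$4$ vertex ($9$ or $10$) removes one of the conflicting constraints, and in each case I would either exhibit the reduced neighbourhood system as a set of non-overlapping or properly-nested consecutive intervals (making all independent-set vertices of type A\&B, or compatible type-C vertices) and invoke Theorem~\ref{main-orientation}, or observe that the resulting split graph has clique $K_5$ with few enough independent vertices that, after a suitable permutation of $\vec{P}$, it falls under Corollary~\ref{K-ell-k} or Theorem~\ref{thm-K-Tri-m-w-r} (subgraph of $K^{\triangle}_5$), or reduces via Lemma~\ref{lemma-assumptions} (vertices dropping to degree $\le 1$) to a smaller case. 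Deleting a clique vertex $v\in\{1,2,3,4,5\}$ leaves a split graph with clique $K_4$, which is word-representable by Theorem~\ref{main-2} once we check it contains none of $T_1,\dots,T_4$ as an induced subgraph — this check is routine given the small size. Exploiting the symmetries of $T_9$ (for instance $9\leftrightarrow 10$ together with a suitable relabelling of clique vertices, and the near-symmetry among the degree-$2$ vertices) should cut the number of genuinely distinct deletions to a handful. The bulk of the minimality argument is thus bookkeeping, and I would present it as a short itemized list exactly in the style of the $T_6$, $T_7$, $T_8$ proofs above.
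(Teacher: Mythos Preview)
Your proposal is correct and follows essentially the same route as the paper's own proof: you pin down the cyclic order on $K_5$ from the three degree-$2$ neighbourhoods, then derive the contradiction from Theorem~\ref{relative-order} by enumerating source placements and checking the two degree-$4$ vertices against one another (the paper does exactly this, splitting into the five source cases and observing that in each case one of $d,e$ is of type C while the other, of type A\&B, is connected to both boundary vertices of the C-vertex's interval). Your minimality plan---exploiting the $a\leftrightarrow c$ (equivalently $9\leftrightarrow 10$, $7\leftrightarrow 8$) symmetry, handling clique-vertex deletions via Lemma~\ref{lemma-assumptions} (degree-$1$ drops) followed by Theorem~\ref{main-2}, and handling independent-set deletions by exhibiting a suitable permutation of $\vec{P}$---is precisely what the paper does, case by case.
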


\begin{figure}
\begin{center}
\begin{tikzpicture}[scale=1, transform shape]
 \tikzstyle{vertex}=[circle,fill=black,minimum size=3pt,inner sep=-1pt]
\path node[draw,shape=circle,minimum width=2cm,minimum height=2cm] (A) {$K_5$};
\foreach \vx/\lab in {45/j,135/i,210/x,270/y,330/z}
  \node[vertex] (v-\lab) at (A.\vx) {};
\foreach \vx in {1,...,5}
  \node[vertex]  (v-\vx) at (\vx*90:1.5cm) {} ;
   \node[vertex] (v-5) at (300:1.5cm) {};
 \foreach \from/\to in {2/i,2/x,1/i,1/j,4/j,4/z,3/i,3/j,3/y,3/x,5/z,5/j,5/i,5/y}
    { \draw (v-\from) -- (v-\to);}
\path (v-1) node[left] {{\small $b$}}
    (v-2) node[left] {{\small $a$}}
    (v-3) node[left] {{\small $d$}}
    (v-4) node[right] {{\small $c$}}
    (v-5) node[right] {{\small $e$}}
   (v-i) node[left] {{\small $i$}}
    (v-j) node[right] {{\small $j$}}
    (v-x) node[left] {{\small $x$}}
    (v-y) node[above] {{\small $y$}}
    (v-z) node[right] {{\small $z$}};
 \end{tikzpicture}
\caption{\label{T9-fig} Proving non-word-representability of $T_9$}
\end{center}
\end{figure}
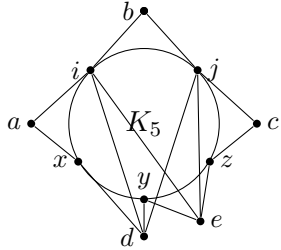

\begin{proof} We begin with proving non-word-representability of $T_9$. Suppose $T_9$ is word-representable, and thus, by Theorem~\ref{key-thm}, it can be oriented semi-transitively.  Pick any such semi-transitive orientation of $T_9$. Then, by Theorem~\ref{main-orientation}, the neighbourhoods of all vertices in the independent set must be consecutive intervals, and the only way to arrange this is shown in Figure~\ref{T9-fig} (where the five vertices of $\vec{P}$ are placed on a circle and the orientation of the longest path is assumed to be in clockwise direction). But then we obtain a contradiction with Theorem~\ref{relative-order} given by vertices $d$ and $e$. Indeed, 
\begin{itemize}
\item if $y$ is the source, or $j$ is the source, or $z$ is the source, then $d$ is of type A\&B and $e$ is of type C; the problem is then with $d$ being connected to $y$ and $i$.
\item if $x$ is the source, or $i$ is the source, then $e$ is of type A\&B and $d$ is of type C; the problem is then with $e$ being connected to $j$ and $y$.
\end{itemize}

For proving the minimality of $T_9$, we consider removing each of the vertices in $T_9$ (one at a time) and, if necessary, describe a permutation of vertices of $K_5$, which results in all neighbourhoods of vertices in $E_{n-5}$ be consecutive intervals on $\vec{P}$, or on whatever remains from $\vec{P}$ (which is still transitively oriented); then, Theorem~\ref{main-orientation} can be used to obtain a semi-transitive orientation of the resulting graph proving its word-representability by Theorem~\ref{key-thm}. 

\begin{itemize}
\item The vertices $a$ and $c$ are clearly symmetric, so we can consider removing $a$ and skip considering removing $c$. In the case of $a$ removed, swap $x$ and $y$ and note that making $x$ in the new position the source, both $d$ and $e$ become of type A\&B, so there is no conflict with Theorem~\ref{relative-order} (all neighbourhoods in question are still consecutive).  
\item If $d$ (resp., $e$) is removed, we can make $x$ (resp., $z$) the source and there will be no conflict with Theorem~\ref{relative-order}.
\item If $b$ is removed, then swapping $x$ and $i$, as well as $z$ and $j$, and making $z$ the source, we obtain both $d$ and $e$ being of type A\&B, so there is no conflict with Theorem~\ref{relative-order}.
\item The vertices $i$ and $j$ are clearly symmetric, so we can consider removing $i$ and skip considering removing $j$. The vertex $a$ becomes of degree 1 and can be also removed by Lemma~\ref{lemma-assumptions}. The obtained graph is word-representable by Theorem~\ref{main-2}. 
 \item The vertices $x$ and $z$ are clearly symmetric, so we can consider removing $x$ and skip considering removing $z$. The vertex $a$ becomes of degree 1 and can be also removed by Lemma~\ref{lemma-assumptions}. The obtained graph is word-representable by Theorem~\ref{main-2}. 
\item If $y$ is removed, then the obtained graph is word-representable by Theorem~\ref{main-2}. 
\end{itemize}
Our proof is completed. \end{proof}

\section{Word-representability of graphs obtained by gluing in a clique}\label{glueing-sec}

By {\em gluing} two graphs in a clique, we mean the following operation. Suppose $a_1,\ldots,a_k$ and $b_1,\ldots,b_k$ are cliques of size $k$ in graphs $G_1$ and $G_2$, respectively. Then, gluing $G_1$ and $G_2$ in a clique of size $k$ means identifying each $a_i$ with one $b_j$, for $i,j\in\{1,\ldots,k\}$ so that the neighbourhood of the obtained vertex $c_{i,j}$ is the union of the neighbourhoods of $a_i$ and $b_j$. 

By the hereditary nature of word-representability, if at least one of two graphs is non-word-representable, then gluing the graphs in a clique will result in a non-word-representable graph. Moreover, it is known that gluing two word-representable graphs in a vertex (a clique of size 1) always results in a word-representable graph (e.g. see \cite[Section 7.3]{K17} or \cite[Section 5.4.3]{KL15}). Further, it is not difficult to come up with examples when gluing two word-representable graphs in an arbitrary clique results in a word-representable graph; for a trivial such example, take two copies of a complete graph $K_n$, gluing which gives $K_n$, and $K_n$ can be represented by any permutation of length $n$. However, there are examples of word-representable graphs gluing which in an edge (a clique of size 2), or a triangle (a clique of size 3), results in a non-word-representable graph. The respective examples can be found in \cite[Section 5.4.3]{KL15}, and they are presented in Figures~\ref{ident-edge} and~\ref{gluing-in-triangle}, respectively. Thus, the rightmost graphs in these pictures are non-word-representable, while the other graphs are word-representable.

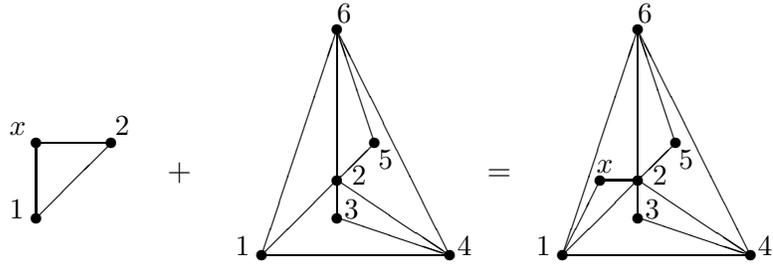
\begin{figure}
\begin{center}

\setlength{\unitlength}{5mm} 

\begin{picture}(17,4)

\put(0,1){

\put(0,0){\p} \put(0,2){\p}   \put(2,2){\p}

\put(-0.7,0){1} \put(-0.7,2.2){$x$}   \put(2.1,2.2){2} 

\put(0,0){\line(0,1){2}}
\put(0,2){\line(1,0){2}}
\put(0,0){\line(1,1){2}}

}

\put(3.5,2){
\put(0,0){{\large +}} 
}

\put(6,0){

\put(0,0){\p} \put(5,0){\p} \put(2,1){\p}  \put(2,2){\p} 
\put(2,6){\p} \put(3,3){\p} 

\put(-0.7,0){1} \put(5.2,0){4} \put(2.2,1){3}  \put(2.4,1.9){2} 
\put(2,6.2){6} \put(3.1,2.3){5}

\put(0,0){\line(1,0){5}}
\put(0,0){\line(1,1){3}}
\put(0,0){\line(1,3){2}}

\put(2,2){\line(0,-1){1}}
\put(2,2){\line(3,-2){3}}
\put(2,1){\line(3,-1){3}}

\put(2,2){\line(0,1){4}}
\put(5,0){\line(-1,2){3}}
\put(3,3){\line(-1,3){1}}

}

\put(12,2){
\put(0,0){{\large =}} 
}

\put(14,0){

\put(0,0){\p} \put(5,0){\p} \put(2,1){\p}  \put(2,2){\p} 
\put(2,6){\p} \put(3,3){\p} \put(1,2){\p} 

\put(-0.7,0){1} \put(0.9,2.2){$x$} \put(5.2,0){4} \put(2.2,1){3}  \put(2.4,1.9){2} 
\put(2,6.2){6} \put(3.1,2.3){5}

\put(0,0){\line(1,0){5}}
\put(0,0){\line(1,1){3}}
\put(0,0){\line(1,3){2}}
\put(0,0){\line(1,2){1}}

\put(1,2){\line(1,0){1}}

\put(2,2){\line(0,-1){1}}
\put(2,2){\line(3,-2){3}}
\put(2,1){\line(3,-1){3}}

\put(2,2){\line(0,1){4}}
\put(5,0){\line(-1,2){3}}
\put(3,3){\line(-1,3){1}}

}

\end{picture}
\caption{Gluing two word-representable graphs in an edge}
\label{ident-edge}
\end{center}
\end{figure}

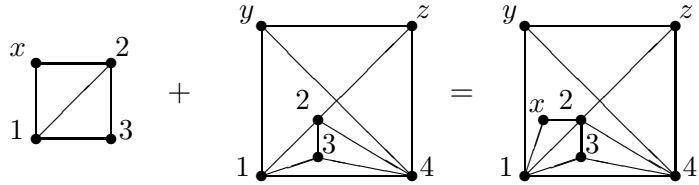
\begin{figure}
\begin{center}

\setlength{\unitlength}{5mm} 

\begin{picture}(17,4)

\put(0,1){

\put(0,0){\p} \put(0,2){\p} \put(2,0){\p}  \put(2,2){\p} 

\put(-0.7,0){1} \put(-0.7,2.2){$x$} \put(2.2,0){3}  \put(2.1,2.2){2} 

\put(0,0){\line(1,0){2}}
\put(0,0){\line(0,1){2}}
\put(2,0){\line(0,1){2}}
\put(0,2){\line(1,0){2}}
\put(0,0){\line(1,1){2}}

}

\put(3.5,2){
\put(0,0){{\large +}} 
}

\put(6,0){

\put(0,0){\p} \put(0,4){\p} \put(4,0){\p}  \put(4,4){\p} 
\put(1.5,1.5){\p} \put(1.5,0.5){\p} 

\put(-0.7,0){1} \put(-0.6,4.2){$y$} \put(4.2,0){4}  \put(4.1,4.2){$z$} 
\put(0.9,1.8){2} \put(1.6,0.6){3}

\put(0,0){\line(3,1){1.5}}
\put(4,0){\line(-5,1){2.5}}
\put(0,0){\line(1,0){4}}
\put(0,4){\line(1,0){4}}
\put(0,0){\line(1,1){4}}
\put(0,4){\line(1,-1){4}}
\put(0,0){\line(0,1){4}}
\put(4,0){\line(0,1){4}}
\put(1.5,1.5){\line(0,-1){1}}
\put(1.5,1.5){\line(5,-3){2.5}}

}

\put(11,2){
\put(0,0){{\large =}} 
}

\put(13,0){

\put(0,0){\p} \put(0,4){\p} \put(4,0){\p}  \put(4,4){\p} 
\put(1.5,1.5){\p} \put(1.5,0.5){\p} \put(0.5,1.5){\p} 

\put(-0.7,0){1} \put(-0.6,4.2){$y$} \put(4.2,0){4}  \put(4.1,4.2){$z$} 
\put(0.9,1.8){2} \put(1.6,0.6){3} \put(0.1,1.7){$x$}

\put(0,0){\line(1,3){0.5}}
\put(0,0){\line(3,1){1.5}}
\put(4,0){\line(-5,1){2.5}}
\put(0,0){\line(1,0){4}}
\put(0,4){\line(1,0){4}}
\put(0,0){\line(1,1){4}}
\put(0,4){\line(1,-1){4}}
\put(0,0){\line(0,1){4}}
\put(4,0){\line(0,1){4}}
\put(1.5,1.5){\line(0,-1){1}}
\put(1.5,1.5){\line(-1,0){1}}
\put(1.5,1.5){\line(5,-3){2.5}}

}

\end{picture}
\caption{Gluing two word-representable graphs in a triangle} \label{gluing-in-triangle}
\end{center}
\end{figure}

The question on whether gluing two word-representable graphs in a clique of size 4, or more, may result in a non-word-representable graph was open, though unpublished until \cite{K17}, for about ten years. In Subsection~\ref{constr-1} we use split graphs to show that gluing two word-representable graphs in a clique of size 4, or more, may result in a non-word-representable graph. A significance of our solution to the problem is in showing that gluing two cliques may be sensitive to which vertices are glued to which vertices, as the word-representability of the resulting graph may depend on it. In either case, in Subsection~\ref{constr-2}, we give another, surprisingly simple solution to the problem, which is based on a generalization of the construction in Figure~\ref{gluing-in-triangle}. 

\subsection{Solving the problem via split graphs}\label{constr-1}

Recall the definition of $K^{\triangle}_\ell$ in Section~\ref{split-gr-summary} (Definition~\ref{def-K-triang}) and the fact that $K^{\triangle}_\ell$ is word-representable by Theorem~\ref{thm-K-Tri-m-w-r}. Further, for $2\leq i\leq \ell$, let $K^i_\ell$ be the graph obtained from the complete graph $K_\ell$ labeled by $1,2,\ldots, \ell$, by adding a new vertex $x$ of degree 2 connected to the vertices $1$ and $i$. Clearly, any  $K^i_\ell$ is isomorphic to $K^2_\ell$, which is an induced subgraph of $K^{\triangle}_\ell$, and thus is word-representable.

Recall the definition of $A_\ell$ in Section~\ref{split-gr-summary} (Definition~\ref{def-Aell}) and the fact that $A_\ell$ is not word-representable by Theorem~\ref{Aell-min-non-repres}.

We observe that, for $\ell\geq 4$, gluing two word-representable graphs $K^{\triangle}_\ell$ and $K^i_\ell$, where  $2<i<\ell$, in the $\ell$-clique so that a vertex $j$ is glued with the vertex $j$ for $1\leq j\leq \ell$, results in a non-word-representable graph $G_i$. Indeed, $G_i$ contains the non-word-representable $A_{i}$ induced by the vertices $1,2,\ldots,(i+1),1',2',\ldots,(i-1)',x$. 

Note that even though $K^2_\ell$ (resp., $K^{\ell}_{\ell}$) is isomorphic to $K^i_\ell$ for  $2<i<\ell$, gluing the $\ell$-cliques in $K^{\triangle}_\ell$ and $K^2_\ell$ (resp., $K^{\ell}_{\ell}$) as above results in a word-representable graph $G_1$ (resp., $G_{\ell}$). Indeed, both of $G_1$ and $G_{\ell}$ are the graph $K^{\triangle}_\ell$ with the additional vertex $x$ having the same neighbourhood as  another vertex in $K^{\triangle}_\ell$ of degree 2. It is a direct corollary of Lemma~\ref{lemma-assumptions} that word-representability of $K^{\triangle}_\ell$ implies word-representability of $G_1$.  Thus, when glueing two word-representable graphs in a clique, the word-representability of the resulting graph may depend on how exactly we glue.

\subsection{Generalizing the known construction}\label{constr-2}

Here we present an alternative solution to the problem of gluing two graphs by generalizing the construction in Figure~\ref{gluing-in-triangle}.

Let $n\geq 2$ and $K'_n$ be the graph obtained from the complete graph $K_n$ on the vertex set $\{1,2,\ldots,n\}$ by adding a vertex $x$ connected to the vertices $1$ and $2$. For example, the leftmost graphs in Figures~\ref{ident-edge}, \ref{gluing-in-triangle}, \ref{gluing-in-K4} and~\ref{gluing-in-K5} are $K'_2$, $K'_3$, $K'_4$ and $K'_5$,  respectively. It is straightforward to check that the word  $x12x34\cdots n$ represents $K'_n$ for any $n\geq 2$. 

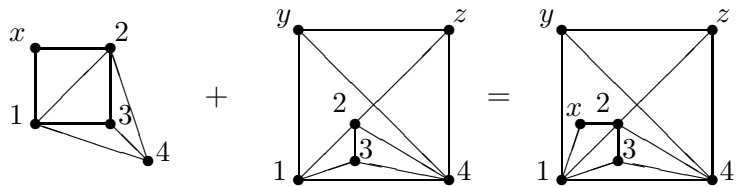
\begin{figure}[h]
\begin{center}

\setlength{\unitlength}{5mm} 

\begin{picture}(17,4)

\put(-1,1.5){

\put(0,0){\p} \put(0,2){\p} \put(2,0){\p}  \put(2,2){\p} \put(3,-1){\p} 

\put(-0.7,0){1} \put(-0.7,2.2){$x$} \put(2.2,0){3}  \put(3.2,-1){4} \put(2.1,2.2){2}  

\put(0,0){\line(1,0){2}}
\put(0,0){\line(0,1){2}}
\put(2,0){\line(0,1){2}}
\put(3,-1){\line(-1,1){1}}
\put(3,-1){\line(-1,3){1}}
\put(3,-1){\line(-3,1){3}}
\put(0,2){\line(1,0){2}}
\put(0,0){\line(1,1){2}}

}

\put(3.5,2){
\put(0,0){{\large +}} 
}

\put(6,0){

\put(0,0){\p} \put(0,4){\p} \put(4,0){\p}  \put(4,4){\p} 
\put(1.5,1.5){\p} \put(1.5,0.5){\p} 

\put(-0.7,0){1} \put(-0.6,4.2){$y$} \put(4.2,0){4}  \put(4.1,4.2){$z$} 
\put(0.9,1.8){2} \put(1.6,0.6){3}

\put(0,0){\line(3,1){1.5}}
\put(4,0){\line(-5,1){2.5}}
\put(0,0){\line(1,0){4}}
\put(0,4){\line(1,0){4}}
\put(0,0){\line(1,1){4}}
\put(0,4){\line(1,-1){4}}
\put(0,0){\line(0,1){4}}
\put(4,0){\line(0,1){4}}
\put(1.5,1.5){\line(0,-1){1}}
\put(1.5,1.5){\line(5,-3){2.5}}

}

\put(11,2){
\put(0,0){{\large =}} 
}

\put(13,0){

\put(0,0){\p} \put(0,4){\p} \put(4,0){\p}  \put(4,4){\p} 
\put(1.5,1.5){\p} \put(1.5,0.5){\p} \put(0.5,1.5){\p} 

\put(-0.7,0){1} \put(-0.6,4.2){$y$} \put(4.2,0){4}  \put(4.1,4.2){$z$} 
\put(0.9,1.8){2} \put(1.6,0.6){3} \put(0.1,1.7){$x$}

\put(0,0){\line(1,3){0.5}}
\put(0,0){\line(3,1){1.5}}
\put(4,0){\line(-5,1){2.5}}
\put(0,0){\line(1,0){4}}
\put(0,4){\line(1,0){4}}
\put(0,0){\line(1,1){4}}
\put(0,4){\line(1,-1){4}}
\put(0,0){\line(0,1){4}}
\put(4,0){\line(0,1){4}}
\put(1.5,1.5){\line(0,-1){1}}
\put(1.5,1.5){\line(-1,0){1}}
\put(1.5,1.5){\line(5,-3){2.5}}

}

\end{picture}
\caption{Gluing two word-representable graphs in $K_4$} \label{gluing-in-K4}
\end{center}
\end{figure}

Let the middle graph in Figure~\ref{gluing-in-triangle} be denoted by $M_4$, and for $n\geq 5$, $M_n$ is obtained by enlarging the clique formed by the vertices $1,2,3,4$ in $M_4$. That is, $M_n$ is obtained from $M_{n-1}$ by adding the vertex $n$ connected to all the vertices in $\{1,2,\ldots,n-1\}$ but not the vertices $y$ and $z$.  For example, $M_5$ is the middle graph in Figure~\ref{gluing-in-K5}. It is straightforward to check that the word $y1z4y2z3567\cdots n$ represents $M_n$ for any $n\geq 4$.\\

\begin{figure}[h]
\begin{center}

\setlength{\unitlength}{5mm} 

\begin{picture}(17,4)

\put(-1,1.5){

\put(0,-1){\p}  \put(0,0){\p} \put(0,2){\p} \put(2,0){\p}  \put(2,2){\p} \put(3,-1){\p} 

\put(-0.7,-1){5} \put(-0.7,0){1} \put(-0.7,2.2){$x$} \put(2.2,0){3}  \put(3.2,-1){4} \put(2.1,2.2){2}  

\put(0,-1){\line(0,1){1}}
\put(0,-1){\line(1,0){3}}
\put(0,-1){\line(2,1){2}}
\put(0,-1){\line(2,3){2}}
\put(0,0){\line(1,0){2}}
\put(0,0){\line(0,1){2}}
\put(2,0){\line(0,1){2}}
\put(3,-1){\line(-1,1){1}}
\put(3,-1){\line(-1,3){1}}
\put(3,-1){\line(-3,1){3}}
\put(0,2){\line(1,0){2}}
\put(0,0){\line(1,1){2}}

}

\put(3.5,2){
\put(0,0){{\large +}} 
}

\put(6,0.5){

\put(0,-1){\p} \put(0,0){\p} \put(0,4){\p} \put(4,0){\p}  \put(4,4){\p} 
\put(1.5,1.5){\p} \put(1.5,0.5){\p} 

\put(-0.7,-1){5}\put(-0.7,0){1} \put(-0.6,4.2){$y$} \put(4.2,0){4}  \put(4.1,4.2){$z$} 
\put(0.9,1.8){2} \put(1.6,0.6){3}

\put(0,-1){\line(0,1){1}}
\put(0,-1){\line(1,1){1.5}}
\put(0,-1){\line(4,1){4}}
\put(0,-1){\line(3,5){1.5}}
\put(0,0){\line(3,1){1.5}}
\put(4,0){\line(-5,1){2.5}}
\put(0,0){\line(1,0){4}}
\put(0,4){\line(1,0){4}}
\put(0,0){\line(1,1){4}}
\put(0,4){\line(1,-1){4}}
\put(0,0){\line(0,1){4}}
\put(4,0){\line(0,1){4}}
\put(1.5,1.5){\line(0,-1){1}}
\put(1.5,1.5){\line(5,-3){2.5}}

}

\put(11,2){
\put(0,0){{\large =}} 
}

\put(13,0.5){

\put(0,-1){\p} \put(0,0){\p} \put(0,4){\p} \put(4,0){\p}  \put(4,4){\p} 
\put(1.5,1.5){\p} \put(1.5,0.5){\p} \put(0.5,1.5){\p} 

\put(-0.7,-1){5}\put(-0.7,0){1} \put(-0.6,4.2){$y$} \put(4.2,0){4}  \put(4.1,4.2){$z$} 
\put(0.9,1.8){2} \put(1.6,0.6){3} \put(0.1,1.7){$x$}

\put(0,-1){\line(0,1){1}}
\put(0,-1){\line(1,1){1.5}}
\put(0,-1){\line(4,1){4}}
\put(0,-1){\line(3,5){1.5}}
\put(0,0){\line(1,3){0.5}}
\put(0,0){\line(3,1){1.5}}
\put(4,0){\line(-5,1){2.5}}
\put(0,0){\line(1,0){4}}
\put(0,4){\line(1,0){4}}
\put(0,0){\line(1,1){4}}
\put(0,4){\line(1,-1){4}}
\put(0,0){\line(0,1){4}}
\put(4,0){\line(0,1){4}}
\put(1.5,1.5){\line(0,-1){1}}
\put(1.5,1.5){\line(-1,0){1}}
\put(1.5,1.5){\line(5,-3){2.5}}

}

\end{picture}
\caption{Gluing two word-representable graphs in $K_5$} \label{gluing-in-K5}
\end{center}
\end{figure}
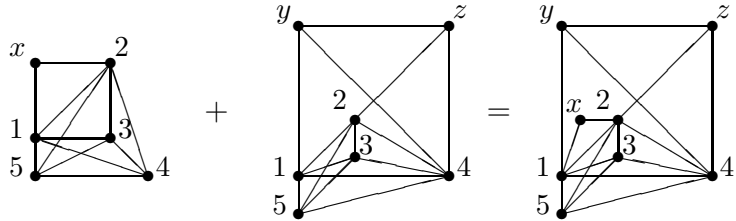

Finally, for $n\geq 4$, let $B_n$ be obtained from $M_n$ by adding a vertex $x$ connected just to the vertices $1$ and $2$. For example, $B_4$ is the rightmost graph in Figures~\ref{gluing-in-triangle} and~\ref{gluing-in-K4}, and $B_5$ is the rightmost graph in Figure~\ref{gluing-in-K5}. Note that using the hereditary nature of word-representable graphs, $B_n$ is not word-representable for any $n\geq 4$ since $B_4$ is not word-representable \cite{K17,KL15}.

Thus, for $n\geq 4$, gluing word-representable graphs $K'_n$ and $M_n$ in the clique formed by the vertices $1,2,\ldots,n$ gives the non-word-representable graph $B_n$, as desired. See Figures~\ref{gluing-in-K4} and~\ref{gluing-in-K5} for the cases of $n=4$ and $n=5$, respectively.

\section{Concluding remarks}

This paper extends our knowledge \cite{KLMW17} on word-representable split graphs, and the general theorems we prove, Theorems~\ref{degree-thm1} and~\ref{degree-thm2}, allow computational characterization of word-representable split graphs with cliques of size 5 in terms of 9 forbidden subgraphs. Taking into account that tackling the general case seems to be not feasible for the moment, a natural next step is in using our general theorems in (computational) characterization of word-representable split graphs with cliques of size 6, which we leave as an open research direction.

\section*{Acknowledgments}

The First author was partially supported by the National Natural Science Foundation of China (Grant Numbers\ 11901319) and
the Fundamental Research Funds for the Central Universities (Grant Number\ 63191349).


\begin{thebibliography}{20}
\bibitem{CH77} V. Chv\'{a}tal, P. L. Hammer. ``Aggregation of inequalities in integer programming'', in Hammer, P. L.; Johnson, E. L.; Korte, B. H.; et al., Studies in Integer Programming (Proc. Worksh. Bonn 1975), Annals of Discrete Mathematics, 1, Amsterdam: North-Holland, (1977) 145--162.
\bibitem{FH77} S. Foldes, P. L. Hammer. Split graphs. In Proceedings of the {\em Eighth Southeastern Conference on Combinatorics, Graph Theory and Computing} (Louisiana State Univ., Baton Rouge, 1977), pages 311--315. Congressus Numerantium, No. XIX.
\bibitem{G} M. Glen. Software available at  {\small \verb>personal.cis.strath.ac.uk/> \verb>sergey.kitaev/word-representable-graphs.html>}
\bibitem{Gol} M. Golumbic. {\em Algorithmic Graph Theory and Perfect Graphs}, Academic Press (1980) ISBN 0-12-289260-7.
\bibitem{HKP16} M. Halld\'orsson, S. Kitaev, A. Pyatkin. Semi-transitive orientations and word-representable graphs. {\em Discr. Appl. Math.} {\bf 201} (2016) 164--171.
\bibitem{K17} S. Kitaev. A Comprehensive Introduction to the Theory of Word-Representable Graphs. In: Charlier E., Leroy J., Rigo M. (eds) Developments in Language Theory. DLT 2017. {\em Lecture Notes in Computer Science} {\bf 10396} (2017) 36--67. 
\bibitem{KLMW17} S. Kitaev, Y. Long, J. Ma, H. Wu. Word-representability of split graphs, arXiv:1709.09725.
\bibitem{KL15} S. Kitaev, V. Lozin. Words and Graphs, {\em Springer}, 2015.
\bibitem{MP95} N. V. R. Mahadev, Uri N. Peled. {\em Threshold Graphs and Related Topics}, Elsevier, 1995.
\end{thebibliography}
\end{document}